\newcommand{\T}{\mathbb T}
\newcommand{\R}{{\mathbb R}} 
\newcommand{\del}{\partial}
\newcommand{\diver}{\operatorname{div}} 
\newcommand{\dx}{\textnormal{d}x}
\newcommand{\dt}{\textnormal{d}t}
\newcommand{\ds}{\textnormal{d}s}
\numberwithin{equation}{section}
\theoremstyle{plain}
\newtheorem{theorem}{Theorem}
\newtheorem{proposition}[theorem]{Proposition}
\newtheorem{definition}[theorem]{Definition}
\newtheorem{remark}[theorem]{Remark}
\begin{document}

\title[The high--friction limit]{Alignment via friction for nonisothermal \\
multicomponent fluid systems}

\author{Stefanos Georgiadis}
\address[Stefanos Georgiadis]{
        King Abdullah University of Science and Technology, CEMSE Division, Thuwal, Saudi Arabia, 23955-6900. \\
         and  Institute for Analysis and Scientific Computing, Vienna University of Technology, Wiedner Hauptstra\ss e 8--10, 1040 Wien, Austria
        }
\email{stefanos.georgiadis@kaust.edu.sa}

\author{Athanasios E. Tzavaras}
\address[Athanasios E. Tzavaras]{King Abdullah University of Science and Technology, CEMSE Division, Thuwal, Saudi Arabia, 23955-6900.}
\email{athanasios.tzavaras@kaust.edu.sa}


\dedicatory{Dedicated to Shi Jin on the occasion of his 60th birthday with friendship and admiration}

\begin{abstract} 
The derivation of an approximate Class--I model for nonisothermal multicomponent systems of fluids, as the high-friction limit of a Class--II model is justified, by validating the Chapman--Enskog expansion performed from the Class--II model towards the Class--I model. The analysis proceeds by comparing two thermomechanical theories via relative entropy.
\end{abstract}

\subjclass[2020]{35B40, 35Q35, 35Q79, 76R50, 76T30, 80A17.}
\keywords{Multicomponent systems, modeling of fluids, convergence among models, high-friction limit, relative entropy method}  
	
\maketitle
	


\section{Introduction}

Multicomponent systems of fluids, i.e. systems of fluids composed of several constituents, are common in nature and industry, with applications including gas separation, catalysis, sedimentation, dialysis, electrolysis, and ion transport \cite{WeKr00}. Due to the complexity of such systems, one distinguishes among different classes (or types) of models, depending on how much information is assumed on the modeling stage. More detailed models have the advantage that they describe the physical phenomena more accurately, but at the same time the number of unknowns makes it difficult to analyze the systems, implement numerical algorithms, and it is impossible to measure experimentally certain of the quantities involved. It is thus expedient to understand the methods of passage from more detailed models to less detailed ones, as the latter are simpler and easier to comprehend. Ample understanding exists in the literature concerning the mechanical aspects of modeling, considerations of consistency with thermodynamics, and modeling of dissipation mechanisms for multicomponent systems, see \cite{BoDr15,Gi99} for a detailed account and references therein. The focus here is on the effect of friction as a mechanism of passage to simplified models.

In the literature, Class--II models refers to models assuming detailed knowledge of the constituent velocities $(v_1,\dots,v_n)$ while Class--I models refers to those models using the barycentric velocity $v$ for the description of motion of the mixture. The reduction from Class--II to Class--I models proceeds via relaxation induced by friction from the constituent velocities to the common barycentric velocity. The mathematical theory of relaxation was initiated in the works \cite{CLL94,JX95} and in the context of problems with friction may lead to diffusion equations \cite{HL92, JPT98}.
In the context of reduction from Class--II to Class--I models, the relevant mechanism is one of alignment through friction to a common barycentric velocity and it is best captured through the Chapman--Enskog expansion  \cite{HuJuTz19,YYZ15}. In this work we describe this mechanism and provide a quantitative convergence result in a context of nonisothermal models.

To focus ideas, let $n$ be the number of components of the system. We consider the Class--II model consisting of $n$ mass balances, $n$ momentum balances and a single energy balance:
\begin{align}
\label{intro-mass2}
    \del_t\rho_i+\textnormal{div}(\rho_iv_i) &=0,
\\
\label{into-momentum2}
        \del_t (\rho_iv_i)+\textnormal{div}(\rho_iv_i\otimes v_i) &=\rho_ib_i-\nabla p_i-\frac{1}{\epsilon}\theta\sum_{j\not=i}b_{ij}\rho_i\rho_j(v_i-v_j),
\\
\label{intro-energy2}
   \del_t\Big(\rho e+\sum_{j=1}^n\frac{1}{2}\rho_jv_j^2\Big) & +\textnormal{div}\Big(\sum_{j=1}^n \Big(\rho_j e_j+p_j +\frac{1}{2}\rho_jv_j^2 \Big)v_j\Big) 
   \\
   &= \textnormal{div}\left(\kappa\nabla\theta\right) 
        +\sum_{j=1}^n\rho_jb_j\cdot v_j+\rho r,
        \nonumber
\end{align}
for $i\in\{1,\dots,n\}$. For simplicity we consider the problem on $\T^3\times[0,\infty)$, where $\T^3$ denotes the three dimensional torus, that is
with space--periodic boundary conditions. The same analysis can be performed in a bounded domain $\Omega$, with no--flux boundary conditions, i.e. 
\begin{equation}\label{nfbc}
 \rho_iv_i \cdot \nu = 0, \quad (\rho_iv_i \otimes v_i+p_i\mathbb{I}) \nu = 0, \quad \left(\sum_{j=1}^n \Big(\rho_j e_j+p_j +\frac{1}{2}\rho_jv_j^2 \Big)v_j-\kappa \nabla\theta\right) \cdot \nu = 0, 
 \end{equation}
 for all $i=1,\dots,n$, on the parabolic boundary $\del\Omega\times[0,\infty)$, where $\nu$ denotes the outward  normal to the boundary $\del\Omega$.

The variables of the model are the $n$ partial densities $\rho_i$, the $n$ partial velocities $v_i$ and the temperature $\theta$; the latter is assumed to be the same for all components. The remaining  quantities of the model are $b_i$ the external force exerted on the $i$--th component, $p_i$ the partial pressure and $\rho_ie_i$ the internal energy of the $i$--th component. 
Moreover, we define 
\[ \rho=\sum_{j=1}^n\rho_j \, , \quad v=\frac{1}{\rho}\sum_{j=1}^n\rho_jv_j \, , \quad p=\sum_{j=1}^np_j \, ,  \quad  \rho e=\sum_{j=1}^n\rho_je_j \, , \quad \rho b=\sum_{j=1}^n\rho_jb_j \]
to be respectively the total mass density $\rho$, the barycentric velocity $v$, the total pressure $p$, the (total) internal energy $\rho e$ and the total force $\rho b$. Finally, $\kappa$ is the thermal conductivity, $\rho r$ the radiative heat supply and $b_{ij}$ are positive and symmetric coefficients, modeling binary interactions between the components, with a strength that is measured by $\epsilon>0$.

The thermodynamics of the model is described by a set of constitutive relations, assuming that the Helmholtz free energy functions $\rho_i\psi_i$ are given, which are
\begin{align}
\label{intro-const1}
    \rho_i\psi_i &=\rho_i\psi_i(\rho_i,\theta),
\\
\label{intro-const2}
    \rho_i\psi_i &=\rho_i e_i-\rho_i\eta_i\theta,
\\
\label{intro-const3}
   \mu_i &=  (\rho_i\psi_i)_{\rho_i}
\\
\label{intro-const4}
   \rho_i\eta_i &= - (\rho_i\psi_i)_\theta
\end{align}
where $\rho_i\eta_i$ are the partial entropies, $\mu_i$ the chemical potentials and we denote partial derivatives by subscripts, e.g. $f_{\rho_i}$ stands for $\frac{\del f}{\del\rho_i}$. The Gibbs-Duhem relation, determining the pressure, is given by
\begin{equation} \label{intro-gd}
    \rho_i\psi_i+p_i=\rho_i\mu_i.
\end{equation}
Given equations \eqref{intro-mass2}--\eqref{intro-energy2} and the constitutive relations \eqref{intro-const1}--\eqref{intro-gd}, one can derive the balance of the total entropy $\rho\eta=\sum_{j=1}^n\rho_j\eta_j$, that reads: 
\begin{equation}\label{intro-entropy2}
\begin{split}
    \del_t(\rho\eta)+\textnormal{div}\left(\sum_{j=1}^n\rho_j\eta_jv_j\right) & = \textnormal{div}\left(\frac{1}{\theta}\kappa\nabla\theta\right)+\frac{1}{\theta^2}\kappa|\nabla\theta|^2 \\
    & \phantom{xxxx} + \frac{1}{2\epsilon}\sum_{i=1}^n\sum_{j=1}^nb_{ij}\rho_i\rho_j|v_i-v_j|^2+\frac{\rho r}{\theta}
\end{split}
\end{equation} and a derivation of \eqref{intro-entropy2} can be found in \cite[Section 5]{BoDr15} or \cite[Appendix C]{GeTz23}.

The above model was derived and studied in \cite{BoGrPa19, HuJuTz19} in the isothermal case and in \cite{BoDr15} in a more general setting including chemical reactions and viscosity, in the case $\epsilon=1$. It was further shown that, if we introduce the diffusional velocities $u_i:=v_i-v$, \eqref{intro-mass2}--\eqref{intro-energy2} can be approximated by a simplified model ignoring terms of order $|u_i|^2$, which contains only the barycentric velocity $v$ and the diffusional velocities $u_i$ (and not the partial velocities $v_i$). The approximate model, which is a Class--I model, reads: 
\begin{align}
\label{intro-mass1}
    \del_t\bar{\rho}_i +\textnormal{div}(\bar{\rho}_i\bar{v}) &=-\textnormal{div}(\bar{\rho}_i\bar{u}_i), \\
\label{intro-momentum1}
    \del_t(\bar{\rho}\bar{v})+\textnormal{div}(\bar{\rho} \bar{v}\otimes\bar{v}) &=\bar{\rho}\bar{b}-\nabla \bar{p}, \\
\label{intro-energy1}
\begin{split}
   \del_t \Big( \bar{\rho}\bar{e}+\frac{1}{2}\bar{\rho} \bar{v}^2\Big)+\textnormal{div}\Big((\bar{\rho} \bar{e}+\bar{p}+\frac{1}{2}\bar{\rho}\bar{v}^2)\bar{v}\Big) &= \textnormal{div}\Big(\bar{\kappa}\nabla\bar{\theta}-\sum_{j=1}^n(\bar{\rho}_j\bar{e}_j+\bar{p}_j)\bar{u}_j\Big) \\
    &\quad +\bar{\rho}\bar{r}+\bar{\rho}\bar{b}\cdot \bar{v}+\sum_{j=1}^n\bar{\rho}_j\bar{b}_j\cdot\bar{u}_j,
\end{split}
\end{align}
where $\bar{u}_i$ is determined by solving the Maxwell--Stefan system:
\begin{equation}
    \label{intro-ms}
    -\sum_{j\not=i}b_{ij}\bar{\theta}\bar{\rho}_i\bar{\rho}_j(\bar{u}_i-\bar{u}_j) = \epsilon\left(\frac{\bar{\rho}_i}{\bar{\rho}}(\bar{\rho}\bar{b}-\nabla\bar{p})-\bar{\rho}_i\bar{b}_i+\nabla \bar{p}_i\right),
\end{equation} subject to the constraint
\begin{equation}
    \label{intro-constr}
    \sum_{i=1}^n \bar{\rho}_i\bar{u}_i = 0,
\end{equation}
which ensures that the total mass density is conserved. The thermodynamics of the model is described by the same laws \eqref{intro-const1}--\eqref{intro-gd} and the entropy balance now takes the form
\begin{equation}\label{intro-entropy1}
\begin{split}
    \del_t(\bar{\rho}\bar{\eta})+\textnormal{div}(\bar{\rho}\bar{\eta}\bar{v}) & = \textnormal{div}\left(\frac{1}{\bar\theta}\bar{\kappa}\nabla\bar{\theta}-\sum_{j=1}^n\bar{\rho}_j\bar\eta_j\bar u_j\right)+\frac{1}{\bar{\theta}^2}\bar{\kappa}|\nabla\bar{\theta}|^2 \\
    & \phantom{xxxxxx} + \frac{1}{2\epsilon}\sum_{i=1}^n\sum_{j=1}^nb_{ij}\bar{\rho}_i\bar{\rho}_j|\bar{u}_i-\bar{u}_j|^2+\frac{\bar{\rho}\bar{r}}{\bar{\theta}}.
\end{split}
\end{equation}

The method used to derive the model \eqref{intro-mass1}--\eqref{intro-constr} in \cite{BoDr15} is of algebraic nature and tailored to the specific model.
 To provide a systematic method, the authors of \cite{HuJuTz19} view this problem as a relaxation process in the collisional time $\epsilon>0$: they rescale the last term in \eqref{into-momentum2} (which corresponds to friction) and investigate the limit $\epsilon\to0$ via a Chapman--Enskog expansion. As $\epsilon\to0$, the friction forces the partial velocities $v_i$ to align to a single velocity $v$,  the barycentric velocity describing the motion of the center of mass, and the  Class--I model 
 emerges as the $\mathcal{O}(\epsilon^2)$--approximation in the Chapman--Enskog expansion. The methodological approach of relative entropy was developed for hyperbolic conservation laws in \cite{Da79} and generalized to hyperbolic/parabolic systems in \cite{ChTz18}. It is used in \cite{HuJuTz19} in order to compare dissipative weak solutions to the Class--II model with strong solutions to the Class--I model thus validating the Chapman--Enskog expansion. 

Here, we employ a similar perspective in the context of non-isothermal models that include the balance of energy equation and entropy production inequality. As shown in \cite{GeTz23}, the Chapman--Enskog expansion applied to the Class--II model \eqref{intro-mass2}--\eqref{intro-energy2} produces at an $\mathcal{O}(\epsilon^2)$--approximation 
the Class--I system \eqref{intro-mass1}--\eqref{intro-constr}, with the diffusional velocities $u_i$ being of order $\mathcal{O}(\epsilon)$. 
We here develop a relative entropy identity for Class--II models and use it to justify the limiting process. This validates the high--friction limit in the weak--strong solution context, i.e. we compare a weak solution of the Class--II model with a strong solution of the Class--I model. It is shown that,  as $\epsilon\to0$, the weak solution converges to the strong one, in the relative entropy sense. Such a result assumes that a strong solution to the Class--I model truly exists, which has been established near equilibrium, \cite{Gi99}. More precisely, there exists a unique strong solution to Class--I models, which is local--in--time for general initial data and can be extended for all positive times for initial data close to an equilibrium state (see \cite[Chapter 9]{Gi99} for more details). 



\section{The relative entropy inequality for Class--II systems}

Let $\omega=(\rho_1,\dots,\rho_n,v_1,\dots,v_n,\theta)$ and 
$\bar{\omega}=(\bar{\rho}_1,\dots,\bar{\rho}_n,\bar{v}_1,\dots,\bar{v}_n,\bar{\theta})$ be two solutions of the Class--II system. Motivated by \cite{ChTz18} and \cite{GeTz23}, we define the relative entropy of $\omega$ and $\bar\omega$ as follows: 
\begin{equation}\label{relen}
    \mathcal{H}(\omega|\bar{\omega})(t)=\int_{\T^3}\left(\frac{1}{2}\sum_{i=1}^n\rho_i|v_i-\bar{v}_i|^2+\sum_{i=1}^n(\rho_i\psi_i)(\omega|\bar{\omega})+(\rho\eta-\bar{\rho}\bar{\eta})(\theta-\bar{\theta})\right)\dx
\end{equation}
 \[ (\rho_i\psi_i)(\omega|\bar{\omega})=\rho_i\psi_i-\bar{\rho}_i\bar{\psi}_i-(\bar{\rho}_i\bar{\psi}_i)_{\rho_i}(\rho_i-\bar{\rho}_i)-(\bar{\rho}_i\bar{\psi}_i)_{\theta}(\theta-\bar{\theta}). \] 
Throughout this paper, we use the convention $\bar f=f(\bar\omega)$ and therefore when we write $\bar\rho_i\bar\psi_i$, we mean $(\rho_i\psi_i)(\bar\omega)$, while $(\bar\rho_i\bar\psi_i)_{\rho_i}=\frac{\del(\rho_i\psi_i)}{\del\rho_i}\big|_{\omega=\bar\omega}$ and similarly $(\bar\rho_i\bar\psi_i)_\theta=\frac{\del(\rho_i\psi_i)}{\del\theta}\big|_{\omega=\bar\omega}$.

If $\rho_i\psi_i$ are $C^3$ on the set \[ U=\{0\leq\rho_i\leq M, \quad 0<\gamma\leq\rho\leq M, \quad 0<\gamma\leq\theta\leq M,\quad \textnormal{for some } \gamma,M>0\}, \] such that \begin{equation}\label{convexity}
    (\rho_i\psi_i)_{\rho_i\rho_i}>0 \quad \textnormal{and} \quad (\rho_i\psi_i)_{\theta\theta}<0,
\end{equation} then according to \cite{GeTz23}, there exists $C>0$ such that \begin{equation}\label{bound1}
    \sum_{i=1}^n(\rho_i\psi_i)(\omega|\bar\omega)+(\rho\eta-\bar{\rho}\bar{\eta})(\theta-\bar{\theta})\geq C\left(\sum_{i=1}^n|\rho_i-\bar{\rho}_i|^2+|\theta-\bar{\theta}|^2\right).
\end{equation} Therefore, the relative entropy defined in \eqref{relen} can serve as a measure of the distance between $\omega$ and $\bar{\omega}$. We note, that the conditions in \eqref{convexity} (known as Gibbs thermodynamic stability conditions) are natural in thermodynamics, as they follow from the basic assumptions that the temperature is a strictly positive quantity and that the energy is a convex function of the entropy, satisfied for example for the ideal gas (for more details see \cite[Appendix A]{GeTz23}).

\begin{remark} \label{rem}
The assumption $\rho_i\psi_i\in C^3(U)$ does not cover many free energy functions that are useful.
The interesting case of the ideal gas, 
$$
\psi_i=R_i\theta\log\rho_i-c_i\theta\log\theta \, ,  \quad \mbox{with $R_i,c_i > 0$ constants},
$$
is not $C^3$ unless both $\rho_i$ and $\theta$ are bounded away from zero, for instance  on the set 
\[ \tilde{U}=\{0<\gamma\leq\rho_i\leq M, \quad 0<\gamma\leq\rho\leq M, \quad 0<\gamma\leq\theta\leq M,\quad \textnormal{for some } \gamma,M>0\}. \] 
We refer to \cite[Chapter 5]{GeTz23} for a detailed explanation on which term is problematic.  
We note the popular model of the ideal gas violates the third law of thermodynamics \cite[Sec 1.10, Sec 3.4]{Callen85},
namely that the entropy vanishes when the absolute temperature goes to zero. 
\end{remark}


Let \[ H(\omega|\bar{\omega})=\sum_{i=1}^n\left[\frac{1}{2}\rho_i|v_i-\bar{v}_i|^2+(\rho_i\psi_i)(\omega|\bar{\omega})+(\rho_i\eta_i-\bar{\rho}_i\bar{\eta}_i)(\theta-\bar{\theta})\right] \] and \[ Q(\omega|\bar\omega)=\sum_{i=1}^n\left[\frac{1}{2}\rho_iv_i|v_i-\bar{v}_i|^2+(\rho_i\psi_i)(\omega|\bar{\omega})v_i+(\rho_i\eta_i-\bar{\rho}_i\bar{\eta}_i)(\theta-\bar{\theta})v_i+(p_i-\bar p_i)(v_i-\bar v_i)\right]. \] We want to obtain an identity of the form: 
\begin{equation}\label{identity}
    \del_tH(\omega|\bar\omega)+\diver Q(\omega|\bar\omega) = \textnormal{RHS}.
\end{equation} 

Carrying out the differentiations and using the equations \eqref{mas}, \eqref{mom'}, \eqref{ener'} and \eqref{entr'}, along with the thermodynamic relations \eqref{intro-const2}--\eqref{intro-gd}, we find that \[ \begin{split}
    & \textnormal{RHS} = -\frac{1}{2\epsilon}\sum_{i=1}^n\sum_{j=1}^n\bar{\theta}b_{ij}\rho_i\rho_j|(v_i-v_j)-(\bar{v}_i-\bar{v}_j)|^2 - \bar{\theta}\kappa|\nabla\log\theta-\nabla\log\bar{\theta}|^2 \\
    & - \sum_{i=1}^np_i(\omega|\bar\omega)\diver\bar{v}_i - \sum_{i=1}^n\rho_i[(v_i-\bar{v}_i)\cdot\nabla] \bar{v}_i\cdot(v_i-\bar{v}_i) + \sum_{i=1}^n\rho_i(b_i-\bar{b}_i)\cdot(v_i-\bar{v}_i) \\
    & - \sum_{i=1}^n(\rho_i\eta_i)(\omega|\bar\omega)(\del_t\bar{\theta}+\bar{v}_i\cdot\nabla \bar{\theta}) - \sum_{i=1}^n\rho_i(v_i-\bar{v}_i)\cdot\nabla\bar{\theta}(\eta_i-\bar{\eta}_i) +\left(\frac{\rho r}{\theta}-\frac{\bar\rho\bar r}{\bar\theta}\right)(\theta-\bar\theta) \\
    & + (\nabla\log\theta-\nabla\log\bar{\theta})\cdot\nabla\log\bar{\theta}(\theta\bar{\kappa}-\bar{\theta}\kappa) + \diver\left((\theta-\bar\theta)\left(\kappa\nabla\log\theta-\bar\kappa\nabla\log\bar\theta\right)\right) \\
    & - \frac{1}{\epsilon}\sum_{i=1}^n\sum_{j=1}^n\bar{\theta}b_{ij}\rho_i(v_i-\bar{v}_i)\cdot(\rho_j-\bar{\rho}_j)(\bar{v}_i-\bar{v}_j) + \frac{1}{\epsilon}\sum_{i=1}^n\sum_{j=1}^n(\theta-\bar{\theta})b_{ij}\rho_i\rho_j(v_i-\bar{v}_i)\cdot(\bar{v}_i-\bar{v}_j) \\
    & + \frac{1}{\epsilon}\sum_{i=1}^n\sum_{j=1}^n(\theta-\bar{\theta})b_{ij}(\rho_i-\bar{\rho}_i)\bar{\rho}_j(\bar{v}_i-\bar{v}_j)\cdot\bar{v}_i + \frac{1}{\epsilon}\sum_{i=1}^n\sum_{j=1}^n(\theta-\bar{\theta})b_{ij}\rho_i(\rho_j-\bar{\rho}_j)(\bar{v}_i-\bar{v}_j)\cdot\bar{v}_i,
\end{split} \] 
where the relative quantities are given by 
$$
\begin{aligned}
 p_i(\omega|\bar\omega) &=p_i-\bar{p}_i-(\bar{p}_i)_{\rho_i}(\rho_i-\bar{\rho}_i)-(\bar{p}_i)_{\theta}(\theta-\bar{\theta}) 
 \\
(\rho_i\eta_i)(\omega|\bar\omega) &=\rho_i\eta_i-\bar{\rho}_i\bar{\eta}_i-(\bar{\rho}_i\bar{\eta}_i)_{\rho_i}(\rho_i-\bar{\rho}_i)-(\bar{\rho}_i\bar{\eta}_i)_{\theta}(\theta-\bar{\theta}).
\end{aligned}
$$

The details of the computation proceed along the lines of \cite[Appendix D]{GeTz23} and are not presented here. While the computation
there concerns Class--I models, the formal computation can be adapted to the present case of Class--II modes in a straightforward way. A rigorous derivation of
\eqref{identity} between a weak and a strong solution is presented in section \ref{(3.3)}.



\section{Asymptotic Derivation of Class--I systems}

The goal is to show how Class--I models are derived as asymptotic high-friction limits in a context of nonisothermal models.
We proceed as follows: (i) First, we interpret a Class--I model as a Class--II system with error terms. (ii) Using the relative entropy formula we
compare  an exact solution to an approximate solution of a Class--II system. (iii) This needs to be done at some prescribed level of solutions; this is made precise in section \ref{(3.2)}, in which we give the definitions of weak and strong solutions. (iv) Finally,
the derivation of the convergence result is done in sections \ref{(3.3)} and \ref{(3.4)}.

\subsection{Reformulation of the Class--I model}

First, we embed a solution of a Class--I model into an approximate solution of a Class--II model.
The equations of the Class--II model contain the partial velocities $v_i$, while the equations of the Class--I model contain the barycentric velocity $v$ and the diffusional velocities $u_i$.

Let $(\bar \rho_1, ... , \bar \rho_n, \bar v, \bar \theta)$ be a solution of \eqref{intro-mass1}--\eqref{intro-energy1}. Then we set
$$
\bar v_i = \bar v + \bar u_i 
$$
and \eqref{intro-mass1}--\eqref{intro-energy1} and \eqref{intro-entropy1} read:

\begin{equation}\label{mas}
    \del_t\bar{\rho}_i+\diver(\bar{\rho}_i\bar{v}_i)=0
\end{equation}
\begin{equation}\label{mom}
    \del_t(\bar{\rho}\bar{v})+\diver(\bar{\rho}\bar{v}\otimes\bar{v})=\bar{\rho}\bar{b}-\nabla\bar{p}
\end{equation}
\begin{equation}\label{ener}
    \del_t\left(\bar{\rho}\bar{e}+\frac{1}{2}\bar{\rho} \bar{v}^2\right)+\diver\left(\sum_{j=1}^n(\bar{\rho}_j\bar{e}_j+\bar{p}_j)\bar{v}_j+\frac{1}{2}\bar{\rho} \bar{v}^2\bar{v}\right)=\diver(\bar{\kappa}\nabla\bar{\theta})+\sum_{j=1}^n\bar{\rho}_j\bar{b}_j\cdot \bar{v}_j+\bar{\rho}\bar{r}.
\end{equation}
\begin{equation}\label{entr'}
\begin{split}
    \del_t(\bar{\rho}\bar{\eta})+\textnormal{div}\left(\sum_{j=1}^n\bar{\rho}_j\bar{\eta}_j\bar{v}_j\right) & = \textnormal{div}\left(\frac{1}{\bar{\theta}}\bar{\kappa}\nabla\bar{\theta}\right)+\frac{1}{\bar{\theta}^2}\bar{\kappa}|\nabla\bar{\theta}|^2 \\
    & \phantom{xxxxxxx} + \frac{1}{2\epsilon}\sum_{i=1}^n\sum_{j=1}^nb_{ij}\bar{\rho}_i\bar{\rho}_j|\bar{v}_i-\bar{v}_j|^2+\frac{\bar{\rho}\bar{r}}{\bar{\theta}}.
\end{split}
\end{equation}

Next, we rewrite \eqref{mom} and \eqref{ener} in a form that resembles the equations of the Class--II model. 
We reformulate \eqref{mom} as:
\begin{equation}\label{mom'}
    \del_t (\bar{\rho}_i\bar{v}_i)+\textnormal{div}(\bar{\rho}_i\bar{v}_i\otimes \bar{v}_i)=\bar{\rho}_i\bar{b}_i-\nabla\bar p_i - \frac{\bar{\theta}}{\epsilon}\sum_{j\not=i}b_{ij}\bar{\rho}_i\bar{\rho}_j(\bar{v}_i-\bar{v}_j)+\bar{R}_i,   
\end{equation} where \[ \begin{split}
    \bar{R}_i & = \del_t(\bar{\rho}_i\bar{v})+\del_t(\bar{\rho}_i\bar{u}_i)+\diver(\bar{\rho}_i \bar{v}\otimes \bar{v})+\diver(\bar{\rho}_i \bar{v}\otimes \bar{u}_i)+\diver(\bar{\rho}_i \bar{u}_i\otimes \bar{v}) \\
    & \phantom{xx} +\diver(\bar{\rho}_i \bar{u}_i\otimes \bar{u}_i)-\bar{\rho}_i\bar{b}_i+\nabla\bar p_i+\frac{\bar{\theta}}{\epsilon}\sum_{j\ne i}b_{ij}\bar{\rho}_i\bar{\rho}_j(\bar{v}_i-\bar{v}_j).
\end{split} \] 
Using \eqref{intro-ms}, we obtain
\[ \begin{split}
    \bar{R}_i & = \del_t(\bar{\rho}_i\bar{v})+\del_t(\bar{\rho}_i\bar{u}_i)+\diver(\bar{\rho}_i \bar{v}\otimes \bar{v})+\diver(\bar{\rho}_i \bar{v}\otimes \bar{u}_i) \\
    & \phantom{xx} +\diver(\bar{\rho}_i \bar{u}_i\otimes \bar{v})
     +\diver(\bar{\rho}_i \bar{u}_i\otimes \bar{u}_i)-\frac{\bar{\rho}_i}{\bar{\rho}}(\bar{\rho}\bar{b}-\nabla\bar{p})
\end{split} \]
and by virtue of $ \del_t\bar{\rho}+\diver(\bar{\rho}\bar{v})=0$, we see that 
\[ \begin{split}
    \del_t(\bar{\rho}_i\bar{v})+\diver(\bar{\rho}_i \bar{v}\otimes \bar{v}) & = (\del_t\bar{\rho}_i+\diver(\bar{\rho}_i\bar{v}))\bar{v}+\bar{\rho}_i(\del_t\bar{v}+(\bar{v} \cdot \nabla) \bar{v}) \\
    & = -\diver(\bar{\rho}_i\bar{u}_i)\bar{v}+\frac{\bar{\rho}_i}{\bar{\rho}}(\del_t(\bar{\rho}\bar{v})+\diver(\bar{\rho} \bar{v}\otimes \bar{v})) \\
    & = -\diver(\bar{\rho}_i\bar{u}_i)\bar{v}+\frac{\bar{\rho}_i}{\bar{\rho}}(\bar{\rho} \bar{b}-\nabla \bar{p})
\end{split} \] and thus \begin{equation}\label{R_i}
    \begin{split}
    \bar{R}_i & = -\diver(\bar{\rho}_i\bar{u}_i)\bar v+\del_t(\bar{\rho}_i\bar{u}_i)+\diver(\bar{\rho}_i \bar{v}\otimes \bar{u}_i)+\diver(\bar{\rho}_i \bar{u}_i\otimes \bar{v})+\diver(\bar{\rho}_i \bar{u}_i\otimes \bar{u}_i).
\end{split}
\end{equation} 

Similarly, we reformulate \eqref{ener} as:
\begin{equation}\label{ener'}
\begin{split}
   \del_t\left(\bar{\rho} \bar{e}+\sum_{j=1}^n\frac{1}{2}\bar{\rho}_j\bar{v}_j^2\right) & +\textnormal{div}\left(\sum_{j=1}^n\left(\bar{\rho}_j \bar{e}_j+\bar{p}_j+\frac{1}{2}\bar{\rho}_j\bar{v}_j^2\right)\bar{v}_j\right) 
   \\
   & \phantom{xxxxxxxxxxxx} = \textnormal{div}\left(\bar{\kappa}\nabla\bar{\theta}\right)+\sum_{j=1}^n\bar{\rho}_j\bar{b}_j\cdot \bar{v}_j+\bar{\rho} \bar{r}+\bar{Q},
\end{split}
\end{equation} where 
\begin{equation}
\begin{split}\label{Q}
    \bar{Q} & = -\del_t\left(\frac{1}{2}\bar{\rho} \bar{v}^2\right)-\diver\left(\frac{1}{2}\bar{\rho} \bar{v}^2\bar{v}\right)+\del_t\left(\frac{1}{2}\sum_{j=1}^n\bar{\rho}_j\bar{v}_j^2\right)+\diver\left(\frac{1}{2}\sum_{j=1}^n\bar{\rho}_j\bar{v}_j^2\bar{v}_j\right) \\
    & = \del_t\left(\frac{1}{2}\sum_{j=1}^n\bar{\rho}_j\bar{u}_j^2\right)+\diver\left(\frac{1}{2}\sum_{j=1}^n\bar{\rho}_j\bar{u}_j^2\bar{u}_j\right)+\diver\left(\frac{3}{2}\sum_{j=1}^n\bar{\rho}_j\bar{u}_j^2\bar{v}\right)
\end{split}
\end{equation} because due to \eqref{intro-constr} \[ \frac{1}{2}\sum_{j=1}^n\bar{\rho}_j\bar{v}_j^2-\frac{1}{2}\bar{\rho}\bar{v}^2 = \frac{1}{2}\sum_{j=1}^n\bar{\rho}_j\bar{u}_j^2 \]  and \[ \frac{1}{2}\sum_{j=1}^n\bar{\rho}_j\bar{v}_j^2\bar{v}_j-\frac{1}{2}\bar{\rho}\bar{v}^2\bar{v} = \frac{1}{2}\sum_{j=1}^n\bar{\rho}_j\bar{u}_j^2\bar{u}_j+\frac{3}{2}\sum_{j=1}^n\bar{\rho}_j\bar{u}_j^2\bar{v}. \] 

The equations of the Class--I model 
are thus reformulated as equations of a Class--II model (namely equations \eqref{mas}, \eqref{mom'}, \eqref{ener'}), with the terms 
 $\bar{R}_i$ and $\bar Q$ given by \eqref{R_i} and \eqref{Q}, respectively. The latter are viewed as error terms. The Maxwell--Stefan system 
\begin{equation}
\begin{aligned}
    \label{ms'}
    -\sum_{j\not=i}b_{ij}\bar{\theta}\bar{\rho}_i\bar{\rho}_j(\bar{u}_i-\bar{u}_j)  &= \epsilon\left(\frac{\bar{\rho}_i}{\bar{\rho}}(\bar{\rho}\bar{b}-\nabla \bar{p})-\bar{\rho}_i\bar{b}_i+\nabla\bar{p}_i\right)
    \\
   \sum_{j = 1}^n \bar \rho_j \bar u_j &= 0
\end{aligned}
\end{equation} 
is uniquely solvable \cite{Gi91,HuJuTz19}, which implies $\bar u_i = \mathcal{O}(\epsilon)$ and thus for smooth solutions 
 $\bar{R}_i$ and $\bar Q$ are of order $\mathcal{O}(\epsilon)$ and $\mathcal{O}(\epsilon^2)$ respectively.

\subsection{Notions of solutions}\label{(3.2)}

In the following, we give the definitions of solutions that will be used. We use the notation $\omega = ((\rho_1,\dots,\rho_n,v_1,\dots,v_n,\theta)$.

\begin{definition}\label{defwksol}
    A function $(\rho_1,\dots,\rho_n,v_1,\dots,v_n,\theta)$ is called a weak solution of the Class--II model \eqref{intro-mass2}--\eqref{intro-energy2}, if for all $i\in\{1,\dots,n\}$: 
    \[ 0\leq\rho_i\in C^0([0,\infty);L^1(\T^3)), \quad \rho_iv_i\in C^0([0,\infty);L^1(\T^3;\R^3)), \] 
    \[ \rho_iv_i\otimes v_i\in L^1_{\textnormal{loc}}(\T^3\times[0,\infty);\R^3\times\R^3), ~~ p_i\in L^1_{\textnormal{loc}}(\T^3\times[0,\infty)), ~~ \rho_ib_i\in L^1_{\textnormal{loc}}(\T^3\times[0,\infty);\R^3), \]
    \[ 0<\theta\in C^0([0,\infty);L^1(\T^3)), \quad (\rho_ie_i+\frac{1}{2}\rho_iv_i^2)\in C^0([0,\infty);L^1(\T^3)), \] 
    \[ (\rho_ie_i+p_i+\frac{1}{2}\rho_iv_i^2)v_i\in L^1_{\textnormal{loc}}(\T^3\times[0,\infty);\R^3), \quad \kappa\nabla\theta\in L^1_{\textnormal{loc}}(\T^3\times[0,\infty);\R^3), \]
    \[ (\rho_ib_i\cdot v_i+\rho r)\in L^1_{\textnormal{loc}}(\T^3\times[0,\infty)), \quad \theta\sum_{j\not=i}b_{ij}\rho_i\rho_j(v_i-v_j) \in L^1_{\textnormal{loc}}(\T^3\times[0,\infty);\R^3) \]
    and $(\rho_1,\dots,\rho_n,v_1,\dots,v_n,\theta)$ solves for all test functions $\psi_i,\xi\in C_c^\infty([0,\infty);C^\infty(\T^3))$ and $\phi_i\in C_c^\infty([0,\infty);C^\infty(\T^3;\R^3))$:
    \begin{equation}\label{weak-mass}
        -\int_{\T^3}\rho_i(x,0)\psi_i(x,0)\dx-\int_0^\infty\int_{\T^3}\rho_i\del_t\psi_i\dx\dt-\int_0^\infty\int_{\T^3}\rho_iv_i\cdot\nabla\psi_i\dx\dt=0,
    \end{equation}
    \vspace{3mm}
    \begin{equation}\label{weak-momentum}
        \begin{split}
            & -\int_{\T^3}(\rho_iv_i)(x,0)\phi_i(x,0)\dx-\int_0^\infty\int_{\T^3}\rho_iv_i\cdot\del_t\phi_i\dx\dt \\
                        & \phantom{xx} -\int_0^\infty\int_{\T^3}(\rho_iv_i\otimes v_i+p_i\mathbb{I}):\nabla\phi_i\dx\dt \\
            & \phantom{xx} \quad = \int_0^\infty\int_{\T^3}\rho_ib_i\phi_i\dx\dt-\frac{1}{\epsilon}\int_0^\infty\int_{\T^3}\theta\sum_{j=1}^nb_{ij}\rho_i\rho_j(v_i-v_j)\phi_i\dx\dt
        \end{split}
    \end{equation}
    \vspace{3mm}
    and \begin{equation}\label{weak-energy}
        \begin{split}
            & -\int_0^\infty\int_{\T^3}(\rho e+\frac{1}{2}\sum_{j=1}^n\rho_jv_j^2)(x,0)\xi(x,0)\dx\dt-\int_0^\infty\int_{\T^3}(\rho e+\frac{1}{2}\sum_{j=1}^n\rho_jv_j^2)\del_t\xi\dx\dt \\
            & -\int_0^\infty\int_{\T^3}\sum_{j=1}^n(\rho_j e_j+p_j+\frac{1}{2}\rho_jv_j^2)v_j\cdot\nabla\xi\dx\dt = -\int_0^\infty\int_{\T^3}\kappa\nabla\theta\cdot\nabla\xi\dx\dt \\
            & +\int_0^\infty\int_{\T^3}(\sum_{j=1}^n\rho_jb_j\cdot v_j+\rho r)\xi\dx\dt.
        \end{split}
    \end{equation}
\end{definition}

\begin{definition}\label{defdisswksol}
    A function $(\rho_1,\dots,\rho_n,v_1,\dots,v_n,\theta)$ is called an entropy weak solution of the Class--II model \eqref{intro-mass2}--\eqref{intro-energy2}, if it is a weak solution 
    according to Definition \ref{defwksol} with the additional regularity
    \[ \rho\eta\in C^0([0,\infty);L^1(\T^3)), \quad  \rho_i\eta_iv_i\in L^1_{\textnormal{loc}}(\T^3\times[0,\infty);\R^3), \quad \mbox{$i\in\{1,\dots,n\}$} \]
    \[ \kappa\nabla\log\theta\in L^1_{\textnormal{loc}}(\T^3\times[0,\infty);\R^3), \quad \kappa|\nabla\log\theta|^2\in L^1_{\textnormal{loc}}(\T^3\times[0,\infty)), \]
    \[ \frac{\rho r}{\theta}\in L^1_{\textnormal{loc}}(\T^3\times[0,\infty)), \quad \sum_{i,j}b_{ij}\rho_i\rho_j|v_i-v_j|^2 \in L^1_{\textnormal{loc}}(\T^3\times[0,\infty)) \]
    that satisfies  the weak form of the integrated entropy inequality
     \begin{equation}\label{weak-entropy}
        \begin{split}
            & -\int_{\T^3}(\rho\eta)(x,0)\chi(x,0)\dx-\int_0^\infty\int_{\T^3}\rho\eta\del_t\chi\dx\dt-\int_0^\infty\int_{\T^3}\sum_{j=1}^n\rho_j\eta_jv_j\cdot\nabla\chi\dx\dt \\
            & \phantom{xx} \geq -\int_0^\infty\int_{\T^3}\frac{1}{\theta}\kappa\nabla\theta\cdot\nabla\chi\dx\dt+\int_0^\infty\int_{\T^3}\frac{1}{\theta^2}\kappa|\nabla\theta|^2\chi\dx\dt \\
            & \phantom{xxxx} + \frac{1}{2\epsilon}\sum_{i=1}^n\sum_{j=1}^n\int_0^\infty\int_{\T^3}b_{ij}\rho_i\rho_j|v_i-v_j|^2\chi\dx\dt+\int_0^\infty\int_{\T^3}\frac{\rho r}{\theta}\chi\dx\dt,
        \end{split}
    \end{equation} holds for all test functions $\chi\in C_c^\infty([0,\infty);C^\infty(\T^3))$, with $\chi\geq0$.
\end{definition}

\begin{definition}
    A function $(\bar{\rho}_1,\dots,\bar{\rho}_n,\bar{v}_1,\dots,\bar{v}_n,\bar{\theta})$ is called a strong solution of the Class--I model \eqref{intro-mass1}--\eqref{intro-constr}, if \eqref{intro-mass1}--\eqref{intro-entropy1} hold almost everywhere on $\T^3$ and for all $t>0$.
\end{definition}


\subsection{Derivation of the relative entropy inequality}\label{(3.3)}
Next, we derive the relative entropy inequality comparing a weak with a strong solution:

\begin{proposition}\label{prop}
    Let $\omega$ be an entropy weak solution of the Class--II model \eqref{intro-mass2}--\eqref{intro-energy2} and $\bar{\omega}$ a strong solution of the Class--I model \eqref{intro-mass1}--\eqref{intro-entropy1}. Then, the following relative entropy inequality
    \begin{equation}\label{relenin}
        \begin{split}
            & \mathcal{H}(\omega|\bar\omega)(t) + \frac{1}{2\epsilon}\sum_{i=1}^n\sum_{j=1}^n\int_0^t\int_{\T^3}\bar{\theta}b_{ij}\rho_i\rho_j|(v_i-v_j)-(\bar{v}_i-\bar{v}_j)|^2\dx\ds \\
            & + \int_0^t\int_{\T^3}\bar{\theta}\kappa|\nabla\log\theta-\nabla\log\bar{\theta}|^2\dx\ds \leq \mathcal{H}(\omega|\bar\omega)(0)-\sum_{i=1}^n\int_0^t\int_{\T^3}p_i(\omega|\bar\omega)\diver\bar{v}_i\dx\ds \\
            & -\sum_{i=1}^n\int_0^t\int_{\T^3}\rho_i[(v_i-\bar{v}_i)\cdot\nabla] \bar{v}_i\cdot(v_i-\bar{v}_i)\dx\ds+\sum_{i=1}^n\int_0^t\int_{\T^3}\rho_i(b_i-\bar{b}_i)\cdot(v_i-\bar{v}_i)\dx\ds \\
           & -\sum_{i=1}^n\int_0^t\int_{\T^3}(\rho_i\eta_i)(\omega|\bar\omega)(\del_s\bar{\theta}+\bar{v}_i\cdot\nabla \bar{\theta})\dx\ds +\sum_{i=1}^n\int_0^t\int_{\T^3}\bar{R}_i\cdot\bar{v}_i\dx\ds \\
            & -\sum_{i=1}^n\int_0^t\int_{\T^3}\rho_i(v_i-\bar{v}_i)\cdot\nabla\bar{\theta}(\eta_i-\bar{\eta}_i)\dx\ds -\sum_{i=1}^n\int_0^t\int_{\T^3}\frac{\rho_i}{\bar{\rho}_i}(v_i-\bar{v}_i)\cdot \bar{R}_i\dx\ds \\
            & +\int_0^t\int_{\T^3}(\nabla\log\theta-\nabla\log\bar{\theta})\cdot\nabla\log\bar{\theta}(\theta\bar{\kappa}-\bar{\theta}\kappa)\dx\ds-\int_0^t\int_{\T^3}\bar{Q}\dx\ds \\
            & -\frac{1}{\epsilon}\sum_{i=1}^n\sum_{j=1}^n\int_0^t\int_{\T^3}\bar{\theta}b_{ij}\rho_i(v_i-\bar{v}_i)\cdot(\rho_j-\bar{\rho}_j)(\bar{v}_i-\bar{v}_j)\dx\ds \\
            & +\frac{1}{\epsilon}\sum_{i=1}^n\sum_{j=1}^n\int_0^t\int_{\T^3}(\theta-\bar{\theta})b_{ij}\rho_i\rho_j(v_i-\bar{v}_i)\cdot(\bar{v}_i-\bar{v}_j)\dx\ds \\
            & +\frac{1}{\epsilon}\sum_{i=1}^n\sum_{j=1}^n\int_0^t\int_{\T^3}(\theta-\bar{\theta})b_{ij}(\rho_i-\bar{\rho}_i)\bar{\rho}_j(\bar{v}_i-\bar{v}_j)\cdot\bar{v}_i\dx\ds \\
            & +\frac{1}{\epsilon}\sum_{i=1}^n\sum_{j=1}^n\int_0^t\int_{\T^3}(\theta-\bar{\theta})b_{ij}\rho_i(\rho_j-\bar{\rho}_j)(\bar{v}_i-\bar{v}_j)\cdot\bar{v}_i\dx\ds \\
            & +\int_0^t\int_{\T^3}\left(\frac{\rho r}{\theta}-\frac{\bar\rho\bar r}{\bar\theta}\right)(\theta-\bar\theta)dxds.
        \end{split}
    \end{equation}
holds for every $t>0$,  where  $\bar{R}_i$ and $\bar Q$ are given by \eqref{R_i} and \eqref{Q}. 
\end{proposition}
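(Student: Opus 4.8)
\emph{Strategy.} I would take the formal balance $\del_tH(\omega|\bar\omega)+\diver Q(\omega|\bar\omega)=\textnormal{RHS}$ of Section 2 as a template and make it rigorous in the weak--strong regime. Since $\bar\omega$ solves the Class--I model, I use the reformulation of Section 3.1, by which $\bar\omega$ satisfies the Class--II equations \eqref{mas}, \eqref{mom'}, \eqref{ener'}, \eqref{entr'} exactly, the momentum and energy balances carrying the error terms $\bar R_i$ and $\bar Q$ of \eqref{R_i} and \eqref{Q}. Carrying these errors through the computation accounts precisely for the additional contributions $\sum_i\bar R_i\cdot\bar v_i$, $-\sum_i\frac{\rho_i}{\bar\rho_i}(v_i-\bar v_i)\cdot\bar R_i$ and $-\bar Q$ in \eqref{relenin}; every other term coincides with the formal right--hand side of Section 2.

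\emph{The conjugate multipliers.} Using $(\rho_i\psi_i)_{\rho_i}=\mu_i$, $(\rho_i\psi_i)_\theta=-\rho_i\eta_i$ and $\rho\psi=\rho e-\rho\eta\theta$, the integrand of $\mathcal H$ in \eqref{relen} rewrites as $E-\sum_i(\rho_iv_i)\cdot\bar v_i+\sum_i\rho_i\big(\tfrac12|\bar v_i|^2-\bar\mu_i\big)-\bar\theta\,\rho\eta$ up to a term depending on $\bar\omega$ alone, where $E=\rho e+\tfrac12\sum_j\rho_jv_j^2$ and $\bar\mu_i=(\bar\rho_i\bar\psi_i)_{\rho_i}$. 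This displays $\mathcal H$ as the conserved densities of $\omega$ paired with the reference multipliers $1,\,-\bar v_i,\,\tfrac12|\bar v_i|^2-\bar\mu_i,\,-\bar\theta$. I would therefore test the weak energy equation \eqref{weak-energy} with $\xi\equiv1$, the weak momentum equation \eqref{weak-momentum} with $\phi_i=-\bar v_i$, the weak mass equation \eqref{weak-mass} with the multiplier $\tfrac12|\bar v_i|^2-\bar\mu_i$, and the entropy inequality \eqref{weak-entropy} with $\chi=-\bar\theta$; the strong equations \eqref{mas}, \eqref{mom'}, \eqref{ener'}, \eqref{entr'} then supply $\del_s$ and $\nabla$ of these multipliers as well as the evolution of the $\bar\omega$--only term. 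Summing and invoking \eqref{intro-const2}--\eqref{intro-gd} reproduces the formal RHS together with the error contributions. The decisive point is that $\omega$ obeys \eqref{weak-entropy} only as an inequality and the multiplier $-\bar\theta$ is negative since $\bar\theta>0$: this single sign reversal turns the entropy--production inequality into the direction $\leq$ of \eqref{relenin} and is responsible for the relative friction dissipation $\tfrac1{2\epsilon}\sum_{i,j}\bar\theta b_{ij}\rho_i\rho_j|(v_i-v_j)-(\bar v_i-\bar v_j)|^2$ and the conduction dissipation $\bar\theta\kappa|\nabla\log\theta-\nabla\log\bar\theta|^2$ appearing on its left--hand side.

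\emph{The main obstacle.} The crux is that the definitions admit only test functions in $C_c^\infty([0,\infty);C^\infty(\R^3))$, while the multipliers above are built from the strong solution $\bar\omega$ and are neither compactly supported nor smooth. I would remedy this by multiplying each multiplier $c(\bar\omega)$ by a spatial cut--off $\zeta_R(x)$ and a temporal cut--off $\lambda_\delta(s)$ approximating $\mathbf 1_{[0,t]}$, so that the resulting products are admissible. Letting $R\to\infty$ kills the commutator terms involving $\nabla\zeta_R$ by the $L^1_{\textnormal{loc}}$ integrability assumptions collected in the two definitions of solution, while $\lambda_\delta\to\mathbf 1_{[0,t]}$ recovers the endpoint values $\mathcal H(\omega|\bar\omega)(t)$ and $\mathcal H(\omega|\bar\omega)(0)$ thanks to the $C^0([0,\infty);L^1(\R^3))$ regularity of the conserved densities and of $\rho\eta$. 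Where the regularity of $\omega$ is too low to form a pointwise product with a multiplier, I would additionally mollify $c(\bar\omega)$ in $x$ and pass to the limit by dominated convergence. Once these passages are justified, the assembled right--hand side is exactly \eqref{relenin}, with $\bar R_i$ and $\bar Q$ retained as genuine source terms, of order $\mathcal O(\epsilon)$ and $\mathcal O(\epsilon^2)$ by the solvability of the Maxwell--Stefan system \eqref{ms'}.
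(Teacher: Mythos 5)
Your proposal follows essentially the same route as the paper's proof: embed the Class--I solution as an approximate Class--II solution with error terms $\bar R_i,\bar Q$ via the Section 3.1 reformulation, subtract the strong equations from the weak formulations, and test with the conjugate multipliers $(\bar\mu_i-\tfrac12\bar v_i^2)\zeta$, $\bar v_i\zeta$, $-\zeta$, $\bar\theta\zeta$ (yours are the same up to an overall sign) with $\zeta$ a time cutoff approximating $\mathbf{1}_{[0,t]}$, the sign of the multiplier on the entropy inequality, fixed by $\bar\theta>0$, yielding the direction $\leq$ together with the two relative dissipation terms, and the remaining identification of the right--hand side being the same algebraic cancellation scheme the paper carries out term by term. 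One minor remark: the spatial cut--off $\zeta_R$ is superfluous under the paper's definitions, since the test class $C_c^\infty([0,\infty);C^\infty(\R^3))$ demands compact support only in time, and your stated justification for discarding the $\nabla\zeta_R$ commutators would in any case require global integrability of the fluxes rather than mere $L^1_{\textnormal{loc}}$ bounds.
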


\begin{remark}
The proof is done for periodic entropy weak solutions defined on $\T^3 \times (0,\infty)$. The same proof would carry over to solutions defined on a bounded domain 
$\Omega \times (0,\infty)$ that satisfy the no-flux boundary conditions \eqref{nfbc}. Concerning solutions of  Class-II models defined on the whole space
$\R^3 \times (0,\infty)$, the reader will note that the integrals in the relative entropy identity \eqref{relenin} are still well defined for classical solutions that approach the same
constant states $(\bar \rho, \bar v_i , \bar \theta)$ at infinity,  provided the functions decay sufficiently fast to the constant sate as $|x| \to \infty$. For such classical solutions one
can still derive the relative entropy inequality and it would be useful if the error terms $\bar R_i$ and $\bar Q$ are integrable.
\end{remark}

\begin{proof} Multiply \eqref{mas},\eqref{mom'},\eqref{ener'} and \eqref{entr'} by the test functions $\psi_i,\phi_i,\xi,\chi$ respectively, as in the weak formulation of the equations of the Class--II model, integrate them over $\T^3\times(0,\infty)$ and subtract them from \eqref{weak-mass}--\eqref{weak-entropy}, in order to obtain:

\[ -\int_{\T^3}(\rho_i-\bar{\rho}_i)(x,0)\psi_i(x,0)\dx-\int_0^\infty\int_{\T^3}(\rho_i-\bar{\rho}_i)\del_t\psi_i\dx\dt-\int_0^\infty\int_{\T^3}(\rho_iv_i-\bar{\rho}_i\bar{v}_i)\cdot\nabla\psi_i\dx\dt=0, \]
\vspace{3mm}
\begin{align*}
    & -\int_{\T^3}(\rho_iv_i-\bar{\rho}_i\bar{v}_i)(x,0)\phi_i(x,0)\dx-\int_0^\infty\int_{\T^3}(\rho_iv_i-\bar{\rho}_i\bar{v}_i)\del_t\phi_i\dx\dt \\
    & -\int_0^\infty\int_{\T^3}(\rho_iv_i\otimes v_i+p_i\mathbb{I}-\bar{\rho}_i\bar{v}_i\otimes\bar{v}_i-\bar{p}_i\mathbb{I}):\nabla\phi_i\dx\dt=\int_0^\infty\int_{\T^3}(\rho_ib_i-\bar{\rho}_i\bar{b}_i)\cdot\phi_i\dx\dt \\
    & -\frac{1}{\epsilon}\int_0^\infty\int_{\T^3}\left(\theta\sum_{j\ne i}b_{ij}\rho_i\rho_j(v_i-v_j)-\bar{\theta}\sum_{j\ne i}b_{ij}\bar{\rho}_i\bar{\rho}_j(\bar{v}_i-\bar{v}_j)\right)\cdot\phi_i\dx\dt-\int_0^\infty\int_{\T^3}\bar{R}_i\cdot\phi_i\dx\dt,
\end{align*}
\begin{align*}
    & -\int_{\T^3}\left(\rho e+\frac{1}{2}\sum_{j=1}^n\rho_jv_j^2-\bar{\rho}\bar{e}-\frac{1}{2}\sum_{j=1}^n\bar{\rho}_j\bar{v}_j^2\right)(x,0)\xi(x,0)\dx \\
    & -\int_0^\infty\int_{\T^3}\left(\rho e+\frac{1}{2}\sum_{j=1}^n\rho_jv_j^2-\bar{\rho}\bar{e}-\frac{1}{2}\sum_{j=1}^n\bar{\rho}_j\bar{v}_j^2\right)\del_t\xi\dx\dt+\int_0^\infty\int_{\T^3}(\kappa\nabla\theta-\bar{\kappa}\nabla\bar\theta)\cdot\nabla\xi\dx\dt \\
    & -\int_0^\infty\int_{\T^3}\sum_{j=1}^n\left((\rho_je_j+p_j+\frac{1}{2}\rho_jv_j^2)v_j-(\bar{\rho}_j\bar{e}_j+\bar{p}_j+\frac{1}{2}\bar{\rho}_j\bar{v}_j^2)\bar{v}_j\right)\cdot\nabla\xi\dx\dt \\
    & = \int_0^\infty\int_{\T^3}\left(\rho r+\sum_{j=1}^n\rho_jb_j\cdot v_j-\bar\rho\bar r-\sum_{j=1}^n\bar\rho_j\bar b_j\cdot\bar v_j\right)\xi\dx\dt-\int_0^\infty\int_{\T^3}\bar Q\xi\dx\dt
\end{align*} 
and 
\begin{align*}
    & -\int_{\T^3}(\rho \eta-\bar\rho\bar\eta)(x,0)\chi(x,0)\dx -\int_0^\infty\int_{\T^3}(\rho \eta-\bar\rho\bar\eta)\del_t\chi\dx\dt \\
    & -\int_0^\infty\int_{\T^3}\sum_{j=1}^n(\rho_j\eta_jv_j-\bar\rho_j\bar\eta_j\bar v_j)\cdot\nabla\chi\dx\dt \geq -\int_0^\infty\int_{\T^3}\left(\frac{1}{\theta}\kappa\nabla\theta-\frac{1}{\bar\theta}\bar{\kappa}\nabla\bar\theta\right)\cdot\nabla\chi\dx\dt \\
    & +\int_0^\infty\int_{\T^3}\left(\frac{1}{2\epsilon}\sum_{i=1}^n\sum_{j=1}^nb_{ij}\rho_i\rho_j|v_i-v_j|^2-\frac{1}{2\epsilon}\sum_{i=1}^n\sum_{j=1}^nb_{ij}\bar\rho_i\bar\rho_j|\bar v_i-\bar v_j|^2\right)\chi\dx\dt \\
    & + \int_0^\infty\int_{\T^3}\left(\frac{1}{\theta^2}\kappa|\nabla\theta|^2-\frac{1}{\bar\theta^2}\bar{\kappa}|\nabla\bar\theta|^2\right)\chi\dx\dt + \int_0^\infty\int_{\T^3}\left(\frac{\rho r}{\theta}-\frac{\bar\rho\bar r}{\bar\theta}\right)\chi\dx\dt.
\end{align*} 

We choose the test functions $\psi_i=\left(\bar\mu_i-\frac{1}{2}\bar v_i^2\right)\zeta, \quad \phi_i=\bar v_i\zeta, \quad \xi=-\zeta \quad \textnormal{and} \quad \chi=\bar\theta\zeta$
where \[ \zeta(s)=\begin{cases}
    1 & 0\leq s<t \\
    \frac{t-s}{\delta}+1 & t\leq s<t+\delta \\
    0 & s\geq t+\delta
\end{cases}, \] and let $\delta\to0$, to obtain: 

\begin{align*}
    & -\int_{\T^3}(\rho_i-\bar{\rho}_i)(x,0)\left(\bar\mu_i-\frac{1}{2}\bar v_i^2\right)(x,0)\dx-\int_0^t\int_{\T^3}(\rho_i-\bar{\rho}_i)\del_s\left(\bar\mu_i-\frac{1}{2}\bar v_i^2\right)\dx\ds \\
    & +\int_{\T^3}(\rho_i-\bar{\rho}_i)\left(\bar\mu_i-\frac{1}{2}\bar v_i^2\right)\dx\ds -\int_0^t\int_{\T^3}(\rho_iv_i-\bar{\rho}_i\bar{v}_i)\cdot\nabla\left(\bar\mu_i-\frac{1}{2}\bar v_i^2\right)\dx=0,
\end{align*} 
\begin{align*}
    & -\int_{\T^3}(\rho_iv_i-\bar{\rho}_i\bar{v}_i)(x,0)\bar v_i(x,0)\dx-\int_0^t\int_{\T^3}(\rho_iv_i-\bar{\rho}_i\bar{v}_i)\del_s\bar v_i\dx\ds+\int_{\T^3}(\rho_iv_i-\bar{\rho}_i\bar{v}_i)\bar v_i\dx \\
    & -\int_0^t\int_{\T^3}(\rho_iv_i\otimes v_i+p_i\mathbb{I} -\bar{\rho}_i\bar{v}_i\otimes\bar{v}_i-\bar{p}_i\mathbb{I}):\nabla\bar v_i\dx\ds=\int_0^t\int_{\T^3}(\rho_ib_i-\bar{\rho}_i\bar{b}_i)\cdot\bar v_i\dx\ds \\
    & -\frac{1}{\epsilon}\int_0^t\int_{\T^3}\left(\theta\sum_{j\ne i}b_{ij}\rho_i\rho_j(v_i-v_j)-\bar{\theta}\sum_{j\ne i}b_{ij}\bar{\rho}_i\bar{\rho}_j(\bar{v}_i-\bar{v}_j)\right)\cdot\bar v_i\dx\ds-\int_0^t\int_{\T^3}\bar{R}_i\cdot\bar v_i\dx\ds,
\end{align*} 
\begin{align*}
    & \int_{\T^3}\left(\rho e+\frac{1}{2}\sum_{j=1}^n\rho_jv_j^2-\bar{\rho}\bar{e}-\frac{1}{2}\sum_{j=1}^n\bar{\rho}_j\bar{v}_j^2\right)(x,0)\dx \\
    & -\int_{\T^3}\left(\rho e+\frac{1}{2}\sum_{j=1}^n\rho_jv_j^2-\bar{\rho}\bar{e}-\frac{1}{2}\sum_{j=1}^n\bar{\rho}_j\bar{v}_j^2\right)\dx \\
    & = -\int_0^t\int_{\T^3}\left(\rho r+\sum_{j=1}^n\rho_jb_j\cdot v_j-\bar\rho\bar r-\sum_{j=1}^n\bar\rho_j\bar b_j\cdot\bar v_j\right)\dx\ds+\int_0^t\int_{\T^3}\bar Q\dx\ds
\end{align*} 
\begin{align*}
    & -\int_{\T^3}(\rho \eta-\bar\rho\bar\eta)(x,0)\bar\theta(x,0)\dx -\int_0^t\int_{\T^3}(\rho \eta-\bar\rho\bar\eta)\del_s\bar\theta\dx\ds+\int_{\T^3}(\rho\eta-\bar\rho\bar\eta)\bar\theta\dx \\
    & -\int_0^t\int_{\T^3}\sum_{j=1}^n(\rho_j\eta_jv_j-\bar\rho_j\bar\eta_j\bar v_j)\cdot\nabla\bar\theta\dx\ds \geq -\int_0^t\int_{\T^3}\left(\frac{1}{\theta}\kappa\nabla\theta-\frac{1}{\bar\theta}\bar{\kappa}\nabla\bar\theta\right)\cdot\nabla\bar\theta\dx\ds \\
    & +\int_0^t\int_{\T^3}\left(\frac{1}{2\epsilon}\sum_{i=1}^n\sum_{j=1}^nb_{ij}\rho_i\rho_j|v_i-v_j|^2-\frac{1}{2\epsilon}\sum_{i=1}^n\sum_{j=1}^nb_{ij}\bar\rho_i\bar\rho_j|\bar v_i-\bar v_j|^2\right)\bar\theta\dx\ds \\
    & + \int_0^t\int_{\T^3}\left(\frac{1}{\theta^2}\kappa|\nabla\theta|^2-\frac{1}{\bar\theta^2}\bar{\kappa}|\nabla\bar\theta|^2\right)\bar\theta\dx\ds + \int_0^t\int_{\T^3}\left(\frac{\rho r}{\theta}-\frac{\bar\rho\bar r}{\bar\theta}\right)\bar\theta\dx\ds.
\end{align*}

Then, summing everything up and by virtue of the computation
\begin{align*}
    & -\sum_{i=1}^n(\rho_i-\bar\rho_i)\left(\bar\mu_i-\frac{1}{2}\bar v_i^2\right)-\sum_{i=1}^n(\rho_iv_i-\bar\rho_i\bar v_i)\bar v_i +\left(\rho e+\frac{1}{2}\sum_{i=1}^n\rho_iv_i^2-\bar\rho\bar e-\frac{1}{2}\sum_{i=1}^n\bar\rho_i\bar v_i^2\right) \\
    & -(\rho\eta-\bar\rho\bar\eta)\bar\theta = -\sum_{i=1}^n(\rho_i-\bar\rho_i)\bar\mu_i+\frac{1}{2}\sum_{i=1}^n(\rho_i\bar v_i^2-2\rho_iv_i\cdot\bar v_i+\rho_iv_i^2)+(\rho e-\bar\rho\bar e)-(\rho\eta-\bar\rho\bar\eta)\bar\theta \\
    & = \frac{1}{2}\sum_{i=1}^n\rho_i|v_i-\bar v_i|^2-\sum_{i=1}^n(\rho_i\psi_i)_{\rho_i}(\rho_i-\bar\rho_i)+\rho e-\bar\rho\bar e-\rho\eta\bar\theta+\bar\rho\bar\eta\bar\theta \\
    & = \frac{1}{2}\sum_{i=1}^n\rho_i|v_i-\bar v_i|^2+\sum_{i=1}^n(\rho_i\psi_i)(\omega|\bar\omega)+(\rho\eta-\bar\rho\bar\eta)(\theta-\bar\theta)=\mathcal{H}(\omega|\bar\omega)
\end{align*}
 one gets the inequality:
\[ \mathcal{H}(\omega|\bar\omega)(t)\leq \mathcal{H}(\omega|\bar\omega)(0)+I_1+I_2+I_3+I_4+I_5+I_6+I_7 \] where 
\begin{align*}
    I_1 & =-\sum_{i=1}^n\int_0^t\int_{\T^3}(\rho_i-\bar\rho_i)\cdot\del_s\left(\bar\mu_i-\frac{1}{2}\bar v_i^2\right)\dx\ds \\
    & \phantom{xx} -\sum_{i=1}^n\int_0^t\int_{\T^3}(\rho_iv_i-\bar\rho_i\bar v_i)\del_s\bar v_i\dx\ds-\int_0^t\int_{\T^3}(\rho\eta-\bar\rho\bar\eta)\del_s\bar\theta\dx\ds,
\end{align*}
\begin{align*}
    I_2 & = -\sum_{i=1}^n\int_0^t\int_{\T^3}(\rho_iv_i-\bar\rho_i\bar v_i)\cdot\nabla\left(\bar\mu_i-\frac{1}{2}\bar v_i^2\right)\dx\ds \\
    & \phantom{xx} -\sum_{i=1}^n\int_0^t\int_{\T^3}(\rho_iv_i\otimes v_i-\bar\rho_i\bar v_i\otimes\bar v_i):\nabla\bar v_i\dx\ds -\sum_{i=1}^n\int_0^t\int_{\T^3}(\rho_iv_i\eta_i-\bar\rho_i\bar v_i\bar\eta_i)\cdot\nabla\bar\theta\dx\ds,
\end{align*}
\begin{align*}
    I_3 & = \frac{1}{\epsilon}\sum_{i=1}^n\sum_{j=1}^n\int_0^t\int_{\T^3}\theta b_{ij}\rho_i\rho_j(v_i-v_j)\cdot\bar v_i\dx\ds-\frac{1}{\epsilon}\sum_{i=1}^n\sum_{j=1}^n\int_0^t\int_{\T^3}\bar\theta b_{ij}\bar \rho_i\bar \rho_j(\bar v_i-\bar v_j)\cdot\bar v_i\dx\ds \\
    & \phantom{xx}-\frac{1}{2\epsilon}\sum_{i=1}^n\sum_{j=1}^n\int_0^t\int_{\T^3}\bar\theta b_{ij}\rho_i\rho_j|v_i-v_j|^2\dx\ds+\frac{1}{2\epsilon}\sum_{i=1}^n\sum_{j=1}^n\int_0^t\int_{\T^3}\bar\theta b_{ij}\bar \rho_i\bar \rho_j|\bar v_i-\bar v_j|^2\dx\ds,
\end{align*}
$$
I_4=\int_0^t\int_{\T^3}\left(\frac{1}{\theta}\kappa\nabla\theta-\frac{1}{\bar\theta}\bar\kappa\nabla\bar\theta\right)\cdot\nabla\bar\theta\dx\ds-\int_0^t\int_{\T^3}\left(\frac{1}{\theta^2}\kappa|\nabla\theta|^2-\frac{1}{\bar\theta^2}\bar\kappa|\nabla\bar\theta|^2\right)\bar\theta\dx\ds,
$$
\begin{align*}
    I_5 & =-\sum_{i=1}^n\int_0^t\int_{\T^3}(\rho_ib_i-\bar\rho_i\bar b_i)\cdot\bar v_i\dx\ds \\
    & \phantom{xx} +\int_0^t\int_{\T^3}\left(\rho r+\sum_{i=1}^n\rho_ib_i\cdot v_i-\bar\rho\bar r-\sum_{i=1}^n\bar\rho_i\bar b_i\cdot\bar v_i\right)\dx\ds -\int_0^t\int_{\T^3}\left(\frac{\rho r}{\theta}-\frac{\bar\rho\bar r}{\bar\theta}\right)\bar\theta \dx\ds,
\end{align*}
$$
 I_6=- \sum_{i=1}^n\int_0^t\int_{\T^3}(p_i-\bar p_i)\diver\bar v_i\dx\ds 
$$
$$
I_7 = \sum_{i=1}^n\int_0^t\int_{\T^3}\bar R_i\cdot\bar v_i\dx\ds-\int_0^t\int_{\T^3}\bar Q\dx\ds
$$

and the plan is to rearrange the above terms in five steps.

\textit{Step 1: We rearrange the terms $I_1$, $I_2$ and $I_6$.} We start with $I_1$ and carry out the following calculation:
\[ \begin{split}
    I_1 & = -\int_0^t\int_{\T^3}\sum_{i=1}^n(\rho_i-\bar\rho_i)\left((\bar\mu_i)_{\rho_i}\del_s\bar\rho_i+(\bar\mu_i)_\theta\del_s\bar\theta\right)\dx\ds \\
    & \phantom{xx}-\int_0^t\int_{\T^3}\sum_{i=1}^n\rho_i(v_i-\bar v_i)\del_s\bar v_i\dx\ds-\int_0^t\int_{\T^3}\sum_{i=1}^n(\rho_i\eta_i-\bar\rho_i\bar\eta_i)\del_s\bar\theta\dx\ds \\
    & = -\int_0^t\int_{\T^3}\sum_{i=1}^n(\rho_i-\bar\rho_i)\left((\bar\mu_i)_{\rho_i}\del_s\bar\rho_i+(\bar\mu_i)_\theta\del_s\bar\theta\right)\dx\ds \\
    & \phantom{xx}-\int_0^t\int_{\T^3}\sum_{i=1}^n\rho_i(v_i-\bar v_i)\del_s\bar v_i\dx\ds-\int_0^t\int_{\T^3}\sum_{i=1}^n(\rho_i\eta_i)(\omega|\bar\omega)\del_s\bar\theta\dx\ds \\
    & \phantom{xx} -\int_0^t\int_{\T^3}\sum_{i=1}^n(\bar\rho_i\bar\eta_i)_{\rho_i}(\rho_i-\bar\rho_i)\del_s\bar\theta\dx\ds-\int_0^t\int_{\T^3}\sum_{i=1}^n(\bar\rho_i\bar\eta_i)_{\theta}(\theta-\bar\theta)\del_s\bar\theta\dx\ds
\end{split} \] and since \begin{equation}\label{aux3}
    (\rho_i\eta_i)_{\rho_i}=(-(\rho_i\psi_i)_\theta)_{\rho_i}=-((\rho_i\psi_i)_{\rho_i})_\theta=-(\mu_i)_\theta
\end{equation} we see that \[ \begin{split}
    I_1 & = -\int_0^t\int_{\T^3}\sum_{i=1}^n\rho_i(v_i-\bar v_i)\del_s\bar v_i\dx\ds -\int_0^t\int_{\T^3}\sum_{i=1}^n(\rho_i\eta_i)(\omega|\bar\omega)\del_s\bar\theta\dx\ds \\
    & \phantom{xx} -\int_0^t\int_{\T^3}\sum_{i=1}^n(\bar\rho_i\bar\eta_i)_{\theta}(\theta-\bar\theta)\del_s\bar\theta\dx\ds-\int_0^t\int_{\T^3}\sum_{i=1}^n(\rho_i-\bar\rho_i)(\bar\mu_i)_{\rho_i}\del_s\bar\rho_i\dx\ds \\
    & =: I_{11}+\cdots+I_{14},
\end{split} \] where \[ \begin{split}
    I_{13} & = -\int_0^t\int_{\T^3}\del_s(\bar\rho\bar\eta)(\theta-\bar\theta)\dx\ds+\int_0^t\int_{\T^3}\sum_{i=1}^n(\bar\rho_i\bar\eta_i)_{\rho_i}(\theta-\bar\theta)\del_s\bar\rho_i\dx\ds \\
    & = \int_0^t\int_{\T^3}\diver\left(\sum_{i=1}^n\bar\rho_i\bar\eta_i\bar v_i\right)(\theta-\bar\theta)+\int_0^t\int_{\T^3}\frac{1}{\bar{\theta}}\bar\kappa\nabla\bar\theta\cdot\nabla(\theta-\bar\theta)\dx\ds \\
    & \phantom{xx} -\int_0^t\int_{\T^3}\frac{1}{\bar\theta^2}\bar\kappa|\nabla\bar\theta|^2(\theta-\bar\theta)\dx\ds-\frac{1}{2\epsilon}\int_0^t\int_{\T^3}\sum_{i=1}^n\sum_{j=1}^nb_{ij}\bar\rho_i\bar\rho_j|\bar v_i-\bar v_j|^2(\theta-\bar\theta)\dx\ds \\
    & \phantom{xx} -\int_0^t\int_{\T^3}\frac{\bar\rho\bar r}{\bar\theta}(\theta-\bar\theta)\dx\ds- \int_0^t\int_{\T^3}\sum_{i=1}^n(\bar\rho_i\bar\eta_i)_{\rho_i}(\theta-\bar\theta)\nabla\bar\rho_i\cdot\bar v_i\dx\ds \\
    & \phantom{xx} - \int_0^t\int_{\T^3}\sum_{i=1}^n(\bar\rho_i\bar\eta_i)_{\rho_i}(\theta-\bar\theta)\bar\rho_i\diver\bar v_i\dx\ds=:I_{131}+\cdots+I_{137}
\end{split} \] and we have used \eqref{mas} and \eqref{entr'} and an integration by parts in the term $I_{132}$. 

Moreover, \[ I_{131}=\int_0^t\int_{\T^3}\sum_{i=1}^n\nabla(\bar\rho_i\bar\eta_i)\cdot\bar v_i(\theta-\bar\theta)\dx\ds+\int_0^t\int_{\T^3}\sum_{i=1}^n\bar\rho_i\bar\eta_i\diver\bar v_i(\theta-\bar\theta)\dx\ds=:I_{1311}+I_{1312}. \]

Again using \eqref{mas}, \[ \begin{split}
    I_{14} & = \int_0^t\int_{\T^3}\sum_{i=1}^n(\rho_i-\bar\rho_i)(\bar\mu_i)_{\rho_i}\nabla\bar\rho_i\cdot\bar v_i\dx\ds+\int_0^t\int_{\T^3}\sum_{i=1}^n(\rho_i-\bar\rho_i)(\bar\mu_i)_{\rho_i}\bar\rho_i\diver\bar v_i\dx\ds \\
    & = \int_0^t\int_{\T^3}\sum_{i=1}^n(\rho_i-\bar\rho_i)\nabla\bar\mu_i\cdot\bar v_i\dx\ds-\int_0^t\int_{\T^3}\sum_{i=1}^n(\rho_i-\bar\rho_i)(\bar\mu_i)_\theta\nabla\bar\theta\cdot\bar v_i\dx\ds \\
    & \phantom{xx} +\int_0^t\int_{\T^3}\sum_{i=1}^n(\rho_i-\bar\rho_i)(\bar\mu_i)_{\rho_i}\bar\rho_i\diver\bar v_i\dx\ds=:I_{141}+\cdots+I_{143}.
\end{split} \]

We now write $I_2$ as
\[ \begin{split}
    I_2  
    & = -\int_0^t\int_{\T^3}\sum_{i=1}^n(\rho_iv_i-\bar\rho_i\bar v_i)\cdot\nabla\bar\mu_i\dx\ds -\int_0^t\int_{\T^3}\sum_{i=1}^n\rho_i[(v_i-\bar{v}_i)\cdot\nabla]\bar v_i\cdot(v_i-\bar v_i) \\
    & \phantom{xx} -\int_0^t\int_{\T^3}\sum_{i=1}^n\rho_i[(v_i - \bar v_i)\cdot\nabla]\bar v_i\cdot \bar v_i\dx\ds -\int_0^t\int_{\T^3}\sum_{i=1}^n(\rho_i\eta_iv_i-\bar\rho_i\bar\eta_i\bar v_i)\cdot\nabla\bar\theta\dx\ds \end{split} \] 
and if we add and subtract the term with the relative pressure:
    \[ \begin{split}
    I_2 & = -\int_0^t\int_{\T^3}\sum_{i=1}^n(\rho_iv_i-\bar\rho_i\bar v_i)\cdot\nabla\bar\mu_i\dx\ds -\int_0^t\int_{\T^3}\sum_{i=1}^n\rho_i[(v_i-\bar{v}_i)\cdot\nabla]\bar v_i\cdot(v_i-\bar v_i) \\
    & \phantom{xx} -\int_0^t\int_{\T^3}\sum_{i=1}^n\rho_i[(v_i-\bar v_i)\cdot\nabla]\bar v_i\cdot\bar v_i\dx\ds -\int_0^t\int_{\T^3}\sum_{i=1}^np_i(\omega|\bar\omega)\diver\bar v_i\dx\ds \\
    & \phantom{xx} -\int_0^t\int_{\T^3}\sum_{i=1}^n(\bar p_i)_{\rho_i}(\rho_i-\bar\rho_i)\diver\bar v_i\dx\ds-\int_0^t\int_{\T^3}\sum_{i=1}^n(\bar p_i)_\theta(\theta-\bar\theta)\diver\bar v_i\dx\ds \\
    & \phantom{xx}-\int_0^t\int_{\T^3}\sum_{i=1}^n(\rho_i\eta_iv_i-\bar\rho_i\bar\eta_i\bar v_i)\cdot\nabla\bar\theta\dx\ds+\int_0^t\int_{\T^3}\sum_{i=1}^n(p_i-\bar p_i)\diver\bar v_i\dx\ds \\
    & =: I_{21}+\cdots+I_{28},
\end{split} \] 

where $I_{28}$ cancels out with $I_6$, while \[ \begin{split}
    I_{27} & = -\int_0^t\int_{\T^3}\sum_{i=1}^n(\rho_i\eta_i-\bar\rho_i\bar\eta_i)\bar v_i\cdot\nabla\bar\theta\dx\ds-\int_0^t\int_{\T^3}\sum_{i=1}^n\rho_i\eta_i(v_i-\bar v_i)\cdot\nabla\bar\theta\dx\ds \\
    & = -\int_0^t\int_{\T^3}\sum_{i=1}^n(\rho_i\eta_i)(\omega|\bar\omega)\bar v_i\cdot\nabla\bar\theta\dx\ds-\int_0^t\int_{\T^3}\sum_{i=1}^n(\bar\rho_i\bar\eta_i)_{\rho_i}(\rho_i-\bar\rho_i)\bar v_i\cdot\nabla\bar\theta\dx\ds \\
    & \phantom{xx} -\int_0^t\int_{\T^3}\sum_{i=1}^n(\bar\rho_i\bar\eta_i)_{\theta}(\theta-\bar\theta)\bar v_i\cdot\nabla\bar\theta\dx\ds-\int_0^t\int_{\T^3}\sum_{i=1}^n\rho_i\eta_i(v_i-\bar v_i)\cdot\nabla\bar\theta\dx\ds \\
    & =: I_{271}+\cdots+I_{274}
\end{split} \] and thus $I_{272}$ cancels out with $I_{142}$ and $I_{1311}$ cancels out with $I_{136}$ and $I_{273}$. Furthermore, due to \begin{equation}\label{gd-grad}
    \nabla p_i=\rho_i\nabla\mu_i+\rho_i\eta_i\nabla\theta
\end{equation} which can be obtained by applying the gradient operator to \eqref{intro-gd} and using \eqref{intro-const3} and \eqref{intro-const4}, we have

\[ \begin{split}
    I_{21} & = -\int_0^t\int_{\T^3}\sum_{i=1}^n\nabla\bar\mu_i\cdot(\rho_i-\bar\rho_i)v_i\dx\ds-\int_0^t\int_{\T^3}\sum_{i=1}^n\nabla\bar\mu_i\cdot\bar\rho_i(v_i-\bar v_i)\dx\ds \\
    & = -\int_0^t\int_{\T^3}\sum_{i=1}^n\nabla\bar\mu_i\cdot(\rho_i-\bar\rho_i)(v_i-\bar v_i)\dx\ds-\int_0^t\int_{\T^3}\sum_{i=1}^n\nabla\bar\mu_i\cdot(\rho_i-\bar\rho_i)\bar v_i\dx\ds \\
    & \phantom{xx} + \int_0^t\int_{\T^3}\sum_{i=1}^n\bar\rho_i\bar\eta_i\nabla\bar\theta\cdot(v_i-\bar v_i)\dx\ds-\int_0^t\int_{\T^3}\sum_{i=1}^n\nabla\bar p_i\cdot(v_i-\bar v_i)\dx\ds \\
    & =: I_{211}+\cdots+I_{214},
\end{split} \] where $I_{212}$ cancels out with $I_{141}$ and \[ I_{213}+I_{274}=-\int_0^t\int_{\T^3}\sum_{i=1}^n(\rho_i\eta_i-\bar\rho\bar\eta_i)(v_i-\bar v_i)\cdot\nabla\bar\theta\dx\ds. \]

Regarding $I_{11}$, using \eqref{mom'} we get \[ \begin{split}
    I_{11} & = \int_0^t\int_{\T^3}\sum_{i=1}^n\rho_i[(v_i-\bar v_i)\cdot \nabla]\bar v_i \cdot \bar v_i \dx\ds -\int_0^t\int_{\T^3}\sum_{i=1}^n\rho_i(v_i-\bar v_i)\cdot\bar b_i\dx\ds \\
    & \phantom{xx} + \int_0^t\int_{\T^3}\sum_{i=1}^n\frac{\rho_i}{\bar\rho_i}(v_i-\bar v_i)\cdot\nabla\bar p_i\dx\ds-\int_0^t\int_{\T^3}\sum_{i=1}^n\frac{\rho_i}{\bar\rho_i}(v_i-\bar v_i)\cdot\bar R_i\dx\ds \\
    & \phantom{xx} +\frac{1}{\epsilon}\int_0^t\int_{\T^3}\sum_{i=1}^n\sum_{j=1}^n\rho_i(v_i-\bar v_i)\bar\theta b_{ij}\bar\rho_j(\bar v_i-\bar v_j)\dx\ds =:I_{111}+\cdots+I_{115}.
\end{split} \] Notice that $I_{111}$ cancels out with $I_{23}$ and \[ I_{214}+I_{113}=\int_0^t\int_{\T^3}\sum_{i=1}^n\frac{1}{\bar\rho_i}\nabla\bar p_i\cdot(v_i-\bar v_i)(\rho_i-\bar\rho_i), \] which combined with $I_{211}$ gives, due to \eqref{gd-grad}, \[ I_{214}+I_{113}+I_{211}=\int_0^t\int_{\T^3}\sum_{i=1}^n\bar\eta_i(\rho_i-\bar\rho_i)(v_i-\bar v_i)\cdot\nabla\bar\theta\dx\ds \] and hence \[ I_{214}+I_{113}+I_{211}+I_{213}+I_{274}=-\int_0^t\int_{\T^3}\sum_{i=1}^n\rho_i(v_i-\bar v_i)\cdot\nabla\bar\theta(\eta_i-\bar\eta_i)\dx\ds. \]

Finally, \eqref{intro-const1}--\eqref{intro-const4} imply 
\begin{align}\label{aux1}
    (p_i)_{\rho_i} &=\rho_i(\mu_i)_{\rho_i}
\\
\label{aux2}
    (p_i)_\theta &=\rho_i\eta_i+\rho_i(\mu_i)_\theta
\end{align}
and due to \eqref{aux2}, \[ I_{1312}+I_{26}=-\int_0^t\int_{\T^3}\sum_{i=1}^n\bar\rho_i(\bar\mu_i)_\theta(\theta-\bar\theta)\diver\bar v_i\dx\ds \] which cancels out with $I_{137}$, because of \eqref{aux3}, while due to \eqref{aux1}, $I_{25}$ cancels out with $I_{143}$.

Putting together $I_1,I_2$ and $I_6$, we get 
 \begin{equation}\label{1+2+6}
     \begin{split}
         & I_1+I_2+I_6 
         = -\sum_{i=1}^n\int_0^t\int_{\T^3}p_i(\omega|\bar\omega)\diver\bar{v}_i\dx\ds -\sum_{i=1}^n\int_0^t\int_{\T^3}\frac{\rho_i}{\bar{\rho}_i}(v_i-\bar{v}_i)\cdot \bar{R}_i\dx\ds \\
         & \phantom{xx} - \sum_{i=1}^n\int_0^t\int_{\T^3}\rho_i[(v_i-\bar{v}_i)\cdot\nabla] \bar{v}_i\cdot(v_i-\bar{v}_i)\dx\ds - \int_0^t\int_{\T^3}\sum_{i=1}^n\rho_i(v_i-\bar v_i)\cdot\bar b_i\dx\ds \\
         & \phantom{xx} -\sum_{i=1}^n\int_0^t\int_{\T^3}(\rho_i\eta_i)(\omega|\bar\omega)(\del_s\bar{\theta}+\bar{v}_i\cdot\nabla \bar{\theta})\dx\ds +\int_0^t\int_{\T^3}\frac{1}{\bar\theta}\bar\kappa\nabla\bar\theta\cdot\nabla(\theta-\bar\theta)\dx\ds \\
         & \phantom{xx} -\sum_{i=1}^n\int_0^t\int_{\T^3}\rho_i(v_i-\bar{v}_i)\cdot\nabla\bar{\theta}(\eta_i-\bar{\eta}_i)\dx\ds - \int_0^t\int_{\T^3}\frac{1}{\bar\theta^2}\bar\kappa|\nabla\bar\theta|^2(\theta-\bar\theta)\dx\ds \\
         & \phantom{xx} +\frac{1}{\epsilon}\int_0^t\int_{\T^3}\sum_{i=1}^n\sum_{j=1}^n\rho_i(v_i-\bar v_i)\bar\theta b_{ij}\bar\rho_j(\bar v_i-\bar v_j)\dx\ds -\int_0^t\int_{\T^3}\frac{\bar\rho\bar r}{\bar\theta}(\theta-\bar\theta)\dx\ds \\
         & \phantom{xx} -\frac{1}{2\epsilon}\int_0^t\int_{\T^3}\sum_{i=1}^n\sum_{j=1}^nb_{ij}\bar\rho_i\bar\rho_j|\bar v_i-\bar v_j|^2(\theta-\bar\theta)\dx\ds.
     \end{split}
 \end{equation}


The rest of the steps consist in combining the terms on the right--hand--side of \eqref{1+2+6} with $I_3, I_4, I_5$ and $I_7$:

\textit{Step 2: Combine $I_3$ with the last and third--to--last term on the right--hand--side of \eqref{1+2+6}.}

We start by noticing that
\[ I_3 = \frac{1}{\epsilon}\sum_{i=1}^n\sum_{j=1}^n\int_0^t\int_{\T^3}\theta b_{ij}\rho_i\rho_j(v_i-v_j)\cdot\bar v_i\dx\ds - \frac{1}{2\epsilon}\sum_{i=1}^n\sum_{j=1}^n\int_0^t\int_{\T^3}\bar\theta b_{ij}\rho_i\rho_j|v_i-v_j|^2\dx\ds. \]

\noindent
The reason is that due to the symmetry of $b_{ij}$

\[
-\frac{1}{\epsilon}\sum_{i=1}^n\sum_{j=1}^n\bar\theta b_{ij}\bar \rho_i\bar \rho_j(\bar v_i-\bar v_j)\cdot\bar v_i\dx\ds = -\frac{1}{2\epsilon}\sum_{i=1}^n\sum_{j=1}^n\bar\theta b_{ij}\bar \rho_i\bar \rho_j|\bar v_i-\bar v_j|^2\dx\ds \]

\noindent
and thus the second and fourth terms of $I_3$ cancel out with each other. Therefore, we have:

\[ \begin{split}
     F & := I_3 +\frac{1}{\epsilon}\int_0^t\int_{\T^3}\sum_{i=1}^n\sum_{j=1}^n\rho_i(v_i-\bar v_i)\bar\theta b_{ij}\bar\rho_j(\bar v_i-\bar v_j)\dx\ds \\
    & \phantom{xxxxxxxxxxx} -\frac{1}{2\epsilon}\int_0^t\int_{\T^3}\sum_{i=1}^n\sum_{j=1}^nb_{ij}\bar\rho_i\bar\rho_j|\bar v_i-\bar v_j|^2(\theta-\bar\theta)\dx\ds \\
    & = \frac{1}{\epsilon}\int_0^t\int_{\T^3}\sum_{i=1}^n\sum_{j=1}^n\theta b_{ij}\rho_i\rho_j(v_i-v_j)\cdot\bar v_i\dx\ds -\frac{1}{2\epsilon}\int_0^t\int_{\T^3}\sum_{i=1}^n\sum_{j=1}^n\bar\theta b_{ij}\rho_i\rho_j|v_i-v_j|^2\dx\ds \\
    & +\frac{1}{\epsilon}\int_0^t\int_{\T^3}\sum_{i=1}^n\sum_{j=1}^n\rho_i(v_i-\bar v_i)\bar\theta b_{ij}\bar\rho_j(\bar v_i-\bar v_j)\dx\ds-\frac{1}{2\epsilon}\int_0^t\int_{\T^3}\sum_{i=1}^n\sum_{j=1}^nb_{ij}\theta\bar\rho_i\bar\rho_j|\bar v_i-\bar v_j|^2\dx\ds \\
    & +\frac{1}{2\epsilon}\int_0^t\int_{\T^3}\sum_{i=1}^n\sum_{j=1}^nb_{ij}\bar\theta\bar\rho_i\bar\rho_j|\bar v_i-\bar v_j|^2\dx\ds =: F_1+\cdots+F_5
\end{split} \]
and we start by collecting only the terms that are multiplied by $\bar\theta$:
\[ \begin{split}
    & F_2 + F_5 + F_3 = -\frac{1}{2\epsilon}\int_0^t\int_{\T^3}\sum_{i=1}^n\sum_{j=1}^n\bar\theta b_{ij}\rho_i\rho_j|v_i-v_j|^2\dx\ds \\
    & +\frac{1}{2\epsilon}\int_0^t\int_{\T^3}\sum_{i=1}^n\sum_{j=1}^n\bar\theta b_{ij}\bar\rho_i\bar\rho_j|\bar v_i-\bar v_j|^2\dx\ds +\frac{1}{\epsilon}\int_0^t\int_{\T^3}\sum_{i=1}^n\sum_{j=1}^n\bar\theta b_{ij}\rho_i(v_i-\bar v_i)\cdot\bar\rho_j(\bar v_i-\bar v_j)\dx\ds \\
    & -\frac{1}{\epsilon}\int_0^t\int_{\T^3}\sum_{i=1}^n\sum_{j=1}^n\bar\theta b_{ij}\rho_i\rho_j(v_i-v_j)\cdot(v_i-\bar v_i)\dx\ds \\
    & +\frac{1}{\epsilon}\int_0^t\int_{\T^3}\sum_{i=1}^n\sum_{j=1}^n\bar\theta b_{ij}\rho_i\rho_j(v_i-v_j)\cdot(v_i-\bar v_i)\dx\ds =: f_1+\cdots+f_5, 
    \end{split} \] where the last two terms are added and subtracted.

    \noindent
    Now, write the third term as
\[ \begin{split}
    f_3 & = -\frac{1}{\epsilon}\int_0^t\int_{\T^3}\sum_{i=1}^n\sum_{j=1}^n\bar\theta b_{ij}\rho_i(v_i-\bar v_i)\cdot(\rho_j-\bar\rho_j)(\bar v_i-\bar v_j)\dx\ds \\
    & \phantom{xx} +\frac{1}{\epsilon}\int_0^t\int_{\T^3}\sum_{i=1}^n\sum_{j=1}^n\bar\theta b_{ij}\rho_i(v_i-\bar v_i)\cdot\rho_j(\bar v_i-\bar v_j)\dx\ds =: f_{31} + f_{32}
\end{split} \]
\noindent
and combine $f_{32}$ with $f_4$, in order to get \[ \begin{split} f_4 + f_{32} & = -\frac{1}{\epsilon}\int_0^t\int_{\T^3}\sum_{i=1}^n\sum_{j=1}^n\bar\theta b_{ij}\rho_i\rho_j(v_i-\bar v_i)\cdot((v_i-v_j)-(\bar v_i-\bar v_j))\dx\ds \\
& = -\frac{1}{2\epsilon}\int_0^t\int_{\T^3}\sum_{i=1}^n\sum_{j=1}^n\bar\theta b_{ij}\rho_i\rho_j|(v_i-v_j)-(\bar v_i-\bar v_j)|^2\dx\ds. \end{split} \]

Now, we write \[ F_1 = \frac{1}{\epsilon}\int_0^t\int_{\T^3}\sum_{i=1}^n\sum_{j=1}^n\theta b_{ij}\rho_i\rho_jv_i\cdot\bar v_i\dx\ds -\frac{1}{\epsilon}\int_0^t\int_{\T^3}\sum_{i=1}^n\sum_{j=1}^n\theta b_{ij}\rho_i\rho_jv_j\cdot\bar v_i\dx\ds, \]

\[ F_4 = -\frac{1}{\epsilon}\int_0^t\int_{\T^3}\sum_{i=1}^n\sum_{j=1}^n\theta b_{ij}\bar\rho_i\bar\rho_j(\bar v_i-\bar v_j)\cdot\bar v_i\dx\ds, \]

\[ f_1 = -\frac{1}{\epsilon}\int_0^t\int_{\T^3}\sum_{i=1}^n\sum_{j=1}^n \bar \theta b_{ij}\rho_i\rho_j (v_i-v_j)\cdot v_i\dx\ds, \]

\[ f_2 = \frac{1}{\epsilon}\int_0^t\int_{\T^3}\sum_{i=1}^n\sum_{j=1}^n\bar\theta b_{ij}\bar\rho_i\bar\rho_j(\bar v_i-\bar v_j)\cdot\bar v_i\dx\ds, \]

\[ \begin{split}
    f_5 & = \frac{1}{\epsilon}\int_0^t\int_{\T^3}\sum_{i=1}^n\sum_{j=1}^n\bar\theta b_{ij}\rho_i\rho_j v_i\cdot(v_i-\bar v_i)\dx\ds -\frac{1}{\epsilon}\int_0^t\int_{\T^3}\sum_{i=1}^n\sum_{j=1}^n\bar\theta b_{ij}\rho_i\rho_j v_j\cdot(v_i-\bar v_i)\dx\ds,
\end{split} \]

\noindent
so that  
\begin{align*}
    & F_1 + F_4 + f_1 + f_2 + f_5 = \frac{1}{\epsilon}\int_0^t\int_{\T^3}\sum_{i=1}^n\sum_{j=1}^n\theta b_{ij}\rho_i\rho_jv_i\cdot\bar v_i\dx\ds \\
    & -\frac{1}{\epsilon}\int_0^t\int_{\T^3}\sum_{i=1}^n\sum_{j=1}^n\theta b_{ij}\rho_i\rho_jv_j\cdot\bar v_i\dx\ds -\frac{1}{\epsilon}\int_0^t\int_{\T^3}\sum_{i=1}^n\sum_{j=1}^n\theta b_{ij}\bar\rho_i\bar\rho_j(\bar v_i-\bar v_j)\cdot\bar v_i\dx\ds \\
    & -\frac{1}{\epsilon}\int_0^t\int_{\T^3}\sum_{i=1}^n\sum_{j=1}^n \bar \theta b_{ij}\rho_i\rho_j (v_i-v_j)\cdot v_i\dx\ds +\frac{1}{\epsilon}\int_0^t\int_{\T^3}\sum_{i=1}^n\sum_{j=1}^n\bar\theta b_{ij}\bar\rho_i\bar\rho_j(\bar v_i-\bar v_j)\cdot\bar v_i\dx\ds \\
    & +\frac{1}{\epsilon}\int_0^t\int_{\T^3}\sum_{i=1}^n\sum_{j=1}^n\bar\theta b_{ij}\rho_i\rho_j v_i\cdot(v_i-\bar v_i)\dx\ds -\frac{1}{\epsilon}\int_0^t\int_{\T^3}\sum_{i=1}^n\sum_{j=1}^n\bar\theta b_{ij}\rho_i\rho_j v_j\cdot(v_i-\bar v_i)\dx\ds
\end{align*}
and, due to symmetry, 
\[ -\frac{1}{\epsilon}\int_0^t\int_{\T^3}\sum_{i=1}^n\sum_{j=1}^n\theta b_{ij}\rho_i\rho_jv_j\cdot\bar v_i\dx\ds=-\frac{1}{\epsilon}\int_0^t\int_{\T^3}\sum_{i=1}^n\sum_{j=1}^n\theta b_{ij}\rho_i\rho_jv_i\cdot\bar v_j\dx\ds, \] 
which implies that

\begin{align*}
    & F_1 + F_4 + f_1 + f_2 + f_5 = \frac{1}{\epsilon}\int_0^t\int_{\T^3}\sum_{i=1}^n\sum_{j=1}^n\theta b_{ij}\rho_i\rho_jv_i\cdot\bar v_i\dx\ds \\
    & -\frac{1}{\epsilon}\int_0^t\int_{\T^3}\sum_{i=1}^n\sum_{j=1}^n\theta b_{ij}\rho_i\rho_jv_i\cdot\bar v_j\dx\ds -\frac{1}{\epsilon}\int_0^t\int_{\T^3}\sum_{i=1}^n\sum_{j=1}^n\theta b_{ij}\bar\rho_i\bar\rho_j(\bar v_i-\bar v_j)\cdot\bar v_i\dx\ds \\
    & -\frac{1}{\epsilon}\int_0^t\int_{\T^3}\sum_{i=1}^n\sum_{j=1}^n \bar \theta b_{ij}\rho_i\rho_j v_i\cdot v_i\dx\ds +\frac{1}{\epsilon}\int_0^t\int_{\T^3}\sum_{i=1}^n\sum_{j=1}^n \bar \theta b_{ij}\rho_i\rho_j v_i\cdot v_j\dx\ds \\
    & +\frac{1}{\epsilon}\int_0^t\int_{\T^3}\sum_{i=1}^n\sum_{j=1}^n\bar\theta b_{ij}\bar\rho_i\bar\rho_j(\bar v_i-\bar v_j)\cdot\bar v_i\dx\ds +\frac{1}{\epsilon}\int_0^t\int_{\T^3}\sum_{i=1}^n\sum_{j=1}^n\bar\theta b_{ij}\rho_i\rho_j v_i\cdot(v_i-\bar v_i)\dx\ds \\
    & -\frac{1}{\epsilon}\int_0^t\int_{\T^3}\sum_{i=1}^n\sum_{j=1}^n\bar\theta b_{ij}\rho_i\rho_j v_j\cdot(v_i-\bar v_i)\dx\ds
\end{align*}

\noindent
and a rearrangement of the terms gives 
\[ \begin{split}
    F_1 + F_4 + f_1 + f_2 + f_5 & = \frac{1}{\epsilon}\int_0^t\int_{\T^3}\sum_{i=1}^n\sum_{j=1}^n(\theta-\bar\theta) b_{ij}\rho_i\rho_jv_i\cdot\bar v_i\dx\ds \\
    & \phantom{xx} -\frac{1}{\epsilon}\int_0^t\int_{\T^3}\sum_{i=1}^n\sum_{j=1}^n(\theta-\bar\theta) b_{ij}\rho_i\rho_j\bar v_i\cdot \bar v_j\dx\ds \\
    & \phantom{xx} -\frac{1}{\epsilon}\int_0^t\int_{\T^3}\sum_{i=1}^n\sum_{j=1}^n(\theta-\bar\theta) b_{ij}\rho_i\rho_j(v_i-\bar v_i)\cdot\bar v_j\dx\ds \\
    & \phantom{xx} -\frac{1}{\epsilon}\int_0^t\int_{\T^3}\sum_{i=1}^n\sum_{j=1}^n(\theta-\bar\theta) b_{ij}\bar\rho_i\bar\rho_j(\bar v_i-\bar v_j)\cdot\bar v_i\dx\ds \\
    & = \frac{1}{\epsilon}\int_0^t\int_{\T^3}\sum_{i=1}^n\sum_{j=1}^n(\theta-\bar\theta) b_{ij}\rho_i\rho_j(v_i-\bar v_i)\cdot(\bar v_i-\bar v_j)\dx\ds \\
    & \phantom{xx} +\frac{1}{\epsilon}\int_0^t\int_{\T^3}\sum_{i=1}^n\sum_{j=1}^n(\theta-\bar\theta) b_{ij}(\rho_i-\bar\rho_i)\bar\rho_j(\bar v_i-\bar v_j)\cdot\bar v_i\dx\ds \\
    & \phantom{xx} +\frac{1}{\epsilon}\int_0^t\int_{\T^3}\sum_{i=1}^n\sum_{j=1}^n(\theta-\bar\theta) b_{ij}\rho_i(\rho_j-\bar\rho_j)(\bar v_i-\bar v_j)\cdot\bar v_i\dx\ds,
\end{split} \]
so that \[ \begin{split}
    F & = -\frac{1}{2\epsilon}\int_0^t\int_{\T^3}\sum_{i=1}^n\sum_{j=1}^n\bar\theta b_{ij}\rho_i\rho_j|(v_i-v_j)-(\bar v_i-\bar v_j)|^2\dx\ds \\
    & \phantom{xx} -\frac{1}{\epsilon}\int_0^t\int_{\T^3}\sum_{i=1}^n\sum_{j=1}^n\bar\theta b_{ij}\rho_i(v_i-\bar v_i)\cdot(\rho_j-\bar\rho_j)(\bar v_i-\bar v_j)\dx\ds \\
    & \phantom{xx} + \frac{1}{\epsilon}\int_0^t\int_{\T^3}\sum_{i=1}^n\sum_{j=1}^n(\theta-\bar\theta) b_{ij}\rho_i\rho_j(v_i-\bar v_i)\cdot(\bar v_i-\bar v_j)\dx\ds \\
    & \phantom{xx} +\frac{1}{\epsilon}\int_0^t\int_{\T^3}\sum_{i=1}^n\sum_{j=1}^n(\theta-\bar\theta) b_{ij}(\rho_i-\bar\rho_i)\bar\rho_j(\bar v_i-\bar v_j)\cdot\bar v_i\dx\ds \\
    & \phantom{xx} +\frac{1}{\epsilon}\int_0^t\int_{\T^3}\sum_{i=1}^n\sum_{j=1}^n(\theta-\bar\theta) b_{ij}\rho_i(\rho_j-\bar\rho_j)(\bar v_i-\bar v_j)\cdot\bar v_i\dx\ds.
\end{split} \]

\textit{Step 3: Combine $I_4$ with the sixth and eighth term on the right--hand--side of \eqref{1+2+6}:}

\[ \begin{split}
    & I_4+\int_0^t\int_{\T^3}\frac{1}{\bar\theta}\bar\kappa\nabla\bar\theta\cdot\nabla(\theta-\bar\theta)\dx\ds-\int_0^t\int_{\T^3}\frac{1}{\bar\theta^2}\bar\kappa|\nabla\bar\theta|^2(\theta-\bar\theta)\dx\ds \\
    & = \int_0^t\int_{\T^3}\left(\kappa\nabla\log\theta-\bar\kappa\nabla\log\bar\theta\right)\cdot\nabla\bar\theta\dx\ds-\int_0^t\int_{\T^3}(\kappa|\nabla\log\theta|^2-\bar\kappa|\nabla\log\bar\theta|^2)\bar\theta\dx\ds \\
    & \phantom{xx} +\int_0^t\int_{\T^3}\bar\kappa\nabla\log\bar\theta\cdot(\nabla\theta-\nabla\bar\theta)\dx\ds-\int_0^t\int_{\T^3}\bar\kappa|\nabla\log\bar\theta|^2(\theta-\bar\theta)\dx\ds \\
    & = -\int_0^t\int_{\T^3}\bar\theta\kappa|\nabla\log\theta-\nabla\log\bar\theta|^2\dx\ds -\int_0^t\int_{\T^3}(\nabla\log\theta-\nabla\log\bar\theta)\cdot\nabla\bar\theta\kappa\dx\ds \\
    & \phantom{xx} +\int_0^t\int_{\T^3}(\nabla\log\theta-\nabla\log\bar\theta)\cdot\nabla\log\bar\theta\theta\bar\kappa\dx\ds \\
    & = -\int_0^t\int_{\T^3}\bar\theta\kappa|\nabla\log\theta-\nabla\log\bar\theta|^2\dx\ds - \int_0^t\int_{\T^3}(\nabla\log\theta-\nabla\log\bar\theta)\cdot\nabla\log\bar\theta(\theta\bar\kappa-\bar\theta\kappa)\dx\ds.
\end{split} \] 

\textit{Step 4: Combine $I_5$ with the fourth and tenth terms on the right--hand--side of \eqref{1+2+6}:}

\[ \begin{split}
    & I_5-\int_0^t\int_{\T^3}\sum_{i=1}^n\rho_i(v_i-\bar{v}_i)\cdot\bar b_i\dx\ds-\int_0^t\int_{\T^3}\frac{\bar\rho\bar r}{\bar\theta}(\theta-\bar\theta)\dx\ds \\
    & = \int_0^t\int_{\T^3}\sum_{i=1}^n\rho_i(b_i-\bar b_i)\cdot(v_i-\bar v_i)\dx\ds+\int_0^t\int_{\T^3}\left(\frac{\rho r}{\theta}-\frac{\bar\rho\bar r}{\bar\theta}\right)(\theta-\bar\theta)\dx\ds
\end{split} \] and finally,

\textit{Step 5: Combine $I_7$ with the second term on the right--hand--side of \eqref{1+2+6}:}
\[ \begin{split}
    & I_7-\int_0^t\int_{\T^3}\sum_{i=1}^n\frac{\rho_i}{\bar\rho_i}(v_i-\bar v_i)\cdot\bar R_i\dx\ds \\
    & = \int_0^t\int_{\T^3}\sum_{i=1}^n\bar R_i\cdot\bar v_i\dx\ds - \int_0^t\int_{\T^3}\bar Q\dx\ds -\int_0^t\int_{\T^3}\sum_{i=1}^n\frac{\rho_i}{\bar\rho_i}(v_i-\bar v_i)\cdot\bar R_i\dx\ds.
\end{split} \]
Putting everything together, we arrive at \eqref{relenin}.

\end{proof}


\subsection{Validation of the high--friction limit}\label{(3.4)}
A careful estimation of the terms on the right-hand side of \eqref{relenin} implies the following theorem:

\begin{theorem}\label{thm}
     Let $\omega$ be an entropy weak solution of the Class--II model \eqref{intro-mass2}--\eqref{intro-energy2} and $\bar{\omega}$ a strong solution of the Class--I model \eqref{intro-mass1}--\eqref{intro-constr}. We assume that the weak solution satisfies 
     \[ 0\leq\rho_1,\dots,\rho_n\leq M, \quad 0<\gamma\leq\rho\leq M, \quad 0<\gamma\leq\theta\leq M \] and the strong solution satisfies \[ 0<\gamma\leq\bar\rho_1,\dots,\bar\rho_n\leq M, \quad |\bar v_1|,\dots,|\bar v_n|\leq M, \quad 0<\gamma\leq\bar\theta\leq M \] 
     \[ |\nabla\bar v_1|,\dots,|\nabla\bar v_n|\leq M, \quad |\del_t\bar{\theta}|\leq M, \quad |\nabla\bar{\theta}|\leq M \] for some $\gamma,M>0$. Moreover, assume that $\kappa$ and $\frac{\rho r}{\theta}$ are Lipschitz functions of $(\rho_1,\dots,\rho_n,\theta)$, with $\kappa$ bounded away from zero, $b_i$ are Lipschitz functions of $(\rho_1,\dots,\rho_n,v_1,\dots,v_n,\theta)$, for all $i\in\{1,\dots,n\}$ and the free energy functions $\rho_i\psi_i\in C^3(U)$ satisfy \eqref{convexity}, for all $i\in\{1,\dots,n\}$. Then, if the initial data are such that $\mathcal{H}(\omega|\bar\omega)(0)\to0$, as $\epsilon\to0$, we have that $\mathcal{H}(\omega|\bar\omega)(t)\to0$, for all $t>0$, as $\epsilon\to0$.
\end{theorem}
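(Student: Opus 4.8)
The plan is to start from the relative entropy inequality \eqref{relenin} and to estimate every term on its right-hand side so as to reach an inequality of the form
\[ \mathcal{H}(\omega|\bar\omega)(t) + \tfrac12 D(t) \le \mathcal{H}(\omega|\bar\omega)(0) + C\int_0^t\mathcal{H}(\omega|\bar\omega)(s)\,\ds + g(\epsilon), \]
where $D(t)\ge0$ collects the two dissipation terms on the left of \eqref{relenin} and $g(\epsilon)\to0$ as $\epsilon\to0$; a Gronwall argument then gives $\mathcal{H}(\omega|\bar\omega)(t)\le\big(\mathcal{H}(\omega|\bar\omega)(0)+g(\epsilon)\big)e^{Ct}$ on any fixed interval, and the conclusion follows since $\mathcal{H}(\omega|\bar\omega)(0)\to0$ by hypothesis. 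Throughout I would invoke the coercivity bound \eqref{bound1}, which together with the kinetic contribution shows that $\mathcal{H}$ dominates $\int_{\R^3}\big(\sum_i\rho_i|v_i-\bar v_i|^2+\sum_i|\rho_i-\bar\rho_i|^2+|\theta-\bar\theta|^2\big)\dx$; this is the quantity against which all terms are measured.

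I would group the right-hand side into four families. First, the genuinely second-order terms — those carrying a relative pressure $p_i(\omega|\bar\omega)$, a relative entropy density $(\rho_i\eta_i)(\omega|\bar\omega)$, or the quadratic velocity factor $\rho_i(v_i-\bar v_i)\cdot\nabla\bar v_i(v_i-\bar v_i)$ — are bounded directly by $C\mathcal{H}$: the $C^3$ smoothness of $\rho_i\psi_i$ on the compact set $U$ makes the relative quantities quadratic in $(\rho_i-\bar\rho_i,\theta-\bar\theta)$, while the uniform bounds on $\nabla\bar v_i$, $\del_t\bar\theta$, $\nabla\bar\theta$ supply bounded coefficients. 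Second, the force and radiative terms $\rho_i(b_i-\bar b_i)\cdot(v_i-\bar v_i)$ and $\big(\tfrac{\rho r}{\theta}-\tfrac{\bar\rho\bar r}{\bar\theta}\big)(\theta-\bar\theta)$ are handled by the Lipschitz continuity of $b_i$ and $\tfrac{\rho r}{\theta}$ together with Young's inequality and the uniform bounds, again producing $C\mathcal{H}$.

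The decisive family is the four $\tfrac1\epsilon$ friction-type terms. These would blow up were it not for the Maxwell--Stefan structure: since \eqref{ms'} is uniquely solvable with $\bar u_i=\mathcal{O}(\epsilon)$, one has $\bar v_i-\bar v_j=\bar u_i-\bar u_j=\mathcal{O}(\epsilon)$, so that $\tfrac1\epsilon(\bar v_i-\bar v_j)$ is uniformly bounded. Isolating this bounded factor in each term and applying Young's inequality, every one reduces to a bounded multiple of $\rho_i|v_i-\bar v_i|^2$, $\rho_i|\rho_j-\bar\rho_j|^2$, $\rho_i\rho_j|\theta-\bar\theta|^2$ or $|\theta-\bar\theta|\,|\rho_i-\bar\rho_i|$, each controlled by $C\mathcal{H}$ via \eqref{bound1} and the bounds $\rho_i,\bar\rho_j\le M$. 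I expect this cancellation of the singular factor $\tfrac1\epsilon$ to be the main point of the argument. The error terms $\int\bar R_i\cdot\bar v_i$, $\int\bar Q$, and $\int\tfrac{\rho_i}{\bar\rho_i}(v_i-\bar v_i)\cdot\bar R_i$ are treated by the same scaling: $\bar R_i=\mathcal{O}(\epsilon)$ and $\bar Q=\mathcal{O}(\epsilon^2)$, so the first two contribute directly to $g(\epsilon)$, while in the last Young's inequality splits off $\tfrac\delta2\rho_i|v_i-\bar v_i|^2\le C\mathcal{H}$ plus $\tfrac{C}{\delta}\tfrac{\rho_i}{\bar\rho_i^2}|\bar R_i|^2=\mathcal{O}(\epsilon^2)$, using $\bar\rho_i\ge\gamma$.

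Finally, the heat-conduction cross term $\int(\nabla\log\theta-\nabla\log\bar\theta)\cdot\nabla\log\bar\theta\,(\theta\bar\kappa-\bar\theta\kappa)$ is the one place where I would exploit the dissipation on the left. Splitting $\theta\bar\kappa-\bar\theta\kappa=\kappa(\theta-\bar\theta)+\theta(\bar\kappa-\kappa)$ and using the Lipschitz continuity of $\kappa$ with the bound $|\nabla\log\bar\theta|\le M/\gamma$, Young's inequality with a small parameter $\delta$ bounds this term by $\tfrac\delta2\int\bar\theta\kappa|\nabla\log\theta-\nabla\log\bar\theta|^2+\tfrac{C}{\delta}\int\big(\sum_i|\rho_i-\bar\rho_i|^2+|\theta-\bar\theta|^2\big)$; since $\bar\theta\ge\gamma$ and $\kappa$ is bounded away from zero, choosing $\delta$ small absorbs the first piece into the heat-conduction dissipation of \eqref{relenin}, while the second is $C\mathcal{H}$. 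Collecting all estimates yields the displayed differential inequality with $g(\epsilon)=\mathcal{O}(\epsilon)$, and Gronwall closes the argument. The two places demanding care are thus the $\tfrac1\epsilon$ terms, where the Maxwell--Stefan scaling $\bar v_i-\bar v_j=\mathcal{O}(\epsilon)$ is indispensable, and the logarithmic heat term, whose absorption relies on the coercivity of $\bar\theta\kappa$.
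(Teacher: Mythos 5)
Your proposal is correct and follows essentially the same route as the paper: the same term-by-term estimation of the right-hand side of \eqref{relenin} using the quadratic bounds on $p_i(\omega|\bar\omega)$, $(\rho_i\eta_i)(\omega|\bar\omega)$ and $\eta_i-\bar\eta_i$ from the $C^3$ smoothness of $\rho_i\psi_i$, the Lipschitz hypotheses for the force and radiative terms, the crucial observation that $\tfrac{1}{\epsilon}(\bar v_i-\bar v_j)=\mathcal{O}(1)$ via the Maxwell--Stefan scaling $\bar u_i=\mathcal{O}(\epsilon)$ to tame the four singular terms, absorption of the heat-conduction cross term into the dissipation by Young's inequality with a small parameter, the $\mathcal{O}(\epsilon)$ treatment of the $\bar R_i$ and $\bar Q$ remainders, and finally coercivity \eqref{bound1} plus Gr\"onwall. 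The only cosmetic differences are that you record the sharper rate $\mathcal{O}(\epsilon^2)$ for the $|\bar R_i|^2$ and $\bar Q$ contributions where the paper is content with $\mathcal{O}(\epsilon)$, which changes nothing in the conclusion.
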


\begin{remark} \label{rem6}
    In the case of the ideal gas, Theorem \ref{thm} is still valid under the additional assumption $0<\gamma\leq\rho_1,\dots,\rho_n\leq M$ (see Remark \ref{rem} or \cite[Section 5]{GeTz23} for more details). 
\end{remark}

\begin{proof}

Having obtained the relative entropy inequality \eqref{relenin}, Theorem \ref{thm} is a direct application of Young's inequality and Gr\"onwall's Lemma. In particular, we estimate each term on the right-hand side of \eqref{relenin}, as follows:

We start by noticing that, according to \cite[Lemma 4.1]{GeTz23}, due to the smoothness of the free energy and the bounds on the strong solution, we have the following bounds: \[ |p_i(\omega|\bar\omega)|\leq C\left(|\rho_i-\bar\rho_i|^2+|\theta-\bar\theta|^2\right) \] and \[ |(\rho_i\eta_i)(\omega|\bar\omega)|\leq C\left(|\rho_i-\bar\rho_i|^2+|\theta-\bar\theta|^2\right), \] which imply that \[ 
    \left|-\int_0^t\int_{\T^3}\sum_{i=1^n}(\rho_i\eta_i)(\omega|\bar\omega)(\del_s\bar\theta+\bar v_i\cdot\nabla\bar\theta)\dx\ds\right|
    \leq C\int_0^t\int_{\T^3}\left(\sum_{i=1}^n|\rho_i-\bar\rho_i|^2+|\theta-\bar\theta|^2\right)\dx\ds \] and \[ 
    \left|-\int_0^t\int_{\T^3}\sum_{i=1^n}p_i(\omega|\bar\omega)\diver\bar v_i\dx\ds\right| \leq C\int_0^t\int_{\T^3}\left(\sum_{i=1}^n|\rho_i-\bar\rho_i|^2+|\theta-\bar\theta|^2\right)\dx\ds. \]

Again by the smoothness of the free energy, and thus the entropy, we obtain \[ |\eta_i-\bar\eta_i|\leq C(|\rho_i-\bar\rho_i|+|\theta-\bar\theta|) \] and thus by Young's inequality, \[ \begin{split}
    & \left|-\int_0^t\int_{\T^3}\sum_{i=1}^n\rho_i(v_i-\bar v_i)\cdot\nabla\bar\theta(\eta_i-\bar\eta_i)\dx\ds\right| \\
    & \phantom{xx} \leq C\int_0^t\int_{\T^3}\left(\sum_{i=1}^n|\rho_i-\bar\rho_i|^2+\sum_{i=1}^n\rho_i|v_i-\bar v_i|^2+|\theta-\bar\theta|^2\right)\dx\ds.
\end{split} \]

Moreover, by Young's inequality and the Lipschitz continuity of $b_i$, \[ \begin{split}
    & \left|\int_0^t\int_{\T^3}\sum_{i=1}^n\rho_i(b_i-\bar b_i)\cdot(v_i-\bar v_i)\dx\ds\right| \leq C\int_0^t\int_{\T^3}\sum_{i=1}^n\left(\rho_i|v_i-\bar v_i|^2+\rho_i|b_i-\bar b_i|^2\right)\dx\ds \\
    & \phantom{xxxxxxxxxxx} \leq C\int_0^t\int_{\T^3}\left(\sum_{i=1}^n|\rho_i-\bar\rho_i|^2+\sum_{i=1}^n\rho_i|v_i-\bar v_i|^2+|\theta-\bar\theta|^2\right)\dx\ds.
\end{split} \]

Furthermore, \[ \begin{split}
    & \left|-\int_0^t\int_{\T^3}(\nabla\log\theta-\nabla\log\bar\theta)\cdot\nabla\log\bar\theta(\theta\bar\kappa-\bar\theta\kappa)\dx\ds\right| \\
    & \phantom{xxxx}\leq \int_0^t\int_{\T^3}|\sqrt{\bar\theta}\sqrt{\kappa}(\nabla\log\theta-\nabla\log\bar\theta)\cdot\nabla\log\bar\theta(\kappa-\bar\kappa)\frac{\theta}{\sqrt{\bar\theta\kappa}}|\dx\ds \\
    & \phantom{xxxxxx} + \int_0^t\int_{\T^3}|\sqrt{\bar\theta}\sqrt{\kappa}(\nabla\log\theta-\nabla\log\bar\theta)\cdot\nabla\log\bar\theta(\theta-\bar\theta)\frac{\sqrt{\kappa}}{\sqrt{\bar\theta}}|\dx\ds \\
    & \phantom{xxxx}\leq \frac{1}{2}\int_0^t\int_{\T^3}\bar\theta\kappa|\nabla\log\theta-\nabla\log\bar\theta|^2\dx\ds + C\int_0^t\int_{\T^3}\left(\sum_{i=1}^n|\rho_i-\bar\rho_i|^2+|\theta-\bar\theta|^2\right)\dx\ds
\end{split} \] by Young's inequality, the lower bounds of $\bar\theta$ and $\kappa$ and the Lipschitz continuity of $\kappa$.

Also, by \eqref{R_i} $\bar R_i = \mathcal{O}(\epsilon)$ and $\sum_i\bar R_i = \mathcal{O}(\epsilon^2)$ and by \eqref{Q} $\bar{Q} = \mathcal{O}(\epsilon^2)$. Thus, \[ \sum_{i=1}^n\bar R_i\cdot \bar v_i-\bar Q = \sum_{i=1}^n\bar R_i\cdot\bar u_i+\bar v\sum_{i=1}^n \bar R_i-\bar Q = \mathcal{O}(\epsilon^2) \] i.e.
\[ \left|\int_0^t\int_{\T^3}\left(\sum_{i=1}^n\bar R_i\cdot \bar v_i-\bar Q\right)\dx\ds\right|\leq\mathcal{O}(\epsilon^2) \] and \[ \begin{split}
    & \left|-\int_0^t\int_{\T^3}\sum_{i=1}^n\frac{\rho_i}{\bar\rho_i}(v_i-\bar v_i)\cdot\bar R_i\dx\ds\right| \\
    & \phantom{xxxxxxx} \leq C \left(\int_0^t\int_{\T^3}\sum_{i=1}^n\rho_i|v_i-\bar v_i|^2\dx\ds+\int_0^t\int_{\T^3}\sum_{i=1}^n\rho_i\frac{\bar R_i^2}{\bar\rho_i^2}\dx\ds\right) \\
    & \phantom{xxxxxxx} \leq C \int_0^t\int_{\T^3}\sum_{i=1}^n\rho_i|v_i-\bar v_i|^2\dx\ds+\mathcal{O}(\epsilon^2).
\end{split} \]

Finally, \[ \begin{split}
    & \left|-\frac{1}{\epsilon}\int_0^t\int_{\T^3}\sum_{i=1}^n\sum_{j=1}^n\bar\theta b_{ij}\rho_i(v_i-\bar v_i)\cdot(\rho_j-\bar\rho_j)(\bar v_i-\bar v_j)\dx\ds\right| \\
    & \phantom{xxxx} \leq C\int_0^t\int_{\T^3}\left(\sum_{i=1}^n|\rho_i-\bar\rho_i|^2+\sum_{i=1}^n\rho_i|v_i-\bar v_i|^2\right)\dx\ds
\end{split} \] and $C$ does not depend on $\epsilon$, because $\frac{1}{\epsilon}(\bar v_i-\bar v_j)= \frac{1}{\epsilon}(\bar u_i-\bar u_j)=\mathcal{O}(1)$ and the remaining terms are treated in a similar fashion.

Putting everything together, we obtain 
\begin{equation}\label{final}
    \begin{split}
    & \mathcal{H}(\omega|\bar\omega)(t) + \frac{1}{2\epsilon}\sum_{i=1}^n\sum_{j=1}^n\int_0^t\int_{\T^3}\bar{\theta}b_{ij}\rho_i\rho_j|(v_i-v_j)-(\bar{v}_i-\bar{v}_j)|^2\dx\ds \\
    & + \frac{1}{2}\int_0^t\int_{\T^3}\bar{\theta}\kappa|\nabla\log\theta-\nabla\log\bar{\theta}|^2\dx\ds \leq \mathcal{H}(\omega|\bar\omega)(0) \\
    & +C \int_0^t\int_{\T^3}\left(\sum_{i=1}^n|\rho_i-\bar\rho_i|^2+\sum_{i=1}^n\rho_i|v_i-\bar v_i|^2+|\theta-\bar\theta|^2\right)\dx\ds+\mathcal{O}(\epsilon^2),
\end{split}
\end{equation} 
where by virtue of \eqref{relen} and \eqref{bound1}, 
\[ \int_0^t\int_{\T^3}\left(\sum_{i=1}^n|\rho_i-\bar\rho_i|^2+\sum_{i=1}^n\rho_i|v_i-\bar v_i|^2+|\theta-\bar\theta|^2\right)\dx\ds \leq C \int_0^t \mathcal{H}(\omega|\bar\omega)(s)\ds. \] 
The dissipation terms on the left-hand side of \eqref{final} are non-negative and thus can be neglected, yielding \[ \mathcal{H}(\omega|\bar\omega)(t) \leq [\mathcal{H}(\omega|\bar\omega)(0)+\mathcal{O}(\epsilon^2)]+C\int_0^t \mathcal{H}(\omega|\bar\omega)(s)ds, \] where $C>0$ is independent of $\epsilon$.

By Gr\"onwall's Lemma \[ \mathcal{H}(\omega|\bar\omega)(t) \leq [\mathcal{H}(\omega|\bar\omega)(0)+\mathcal{O}(\epsilon^2)]e^{Ct}, \] where $C>0$ does not depend on $\epsilon$. Letting $\epsilon\to0$, $\mathcal{H}(\omega|\bar\omega)(0)\to0$ and thus $\mathcal{H}(\omega|\bar\omega)(t)\to0$, for all $t>0$ and the proof is completed.

\end{proof}


\smallskip
{\bf Acknowledgments.} We would like to thank the anonymous referee for his/her many valuable comments. 

{\bf Funding.} Research partially supported by King Abdullah University of Science and Technology (KAUST) baseline funds. The first author acknowledges partial support from the Austrian Science Fund (FWF), grants P33010 and F65. This work has received funding from the European Research Council (ERC) under the European Union's Horizon 2020 research and innovation programme, ERC Advanced Grant no. 101018153.

\smallskip
{\bf Conflict of interest.} The authors have no conflict of interest to report.

	
\end{document}